\documentclass[11pt, a4paper,DIV=12]{scrartcl}

\usepackage{amsthm,amsfonts,amsmath,latexsym,mathrsfs,amssymb}
\usepackage{mathtools}

\usepackage[T1]{fontenc}
\usepackage{newtxtext}
\usepackage{newtxmath}
\usepackage{xcolor}
\usepackage{thmtools}

\usepackage{microtype}

\usepackage{hyperref}
\hypersetup{colorlinks=true, linkcolor=blue!60!black, citecolor=blue!60!black, urlcolor=blue!60!black}

\newtheorem{theorem}{Theorem}[section]
\newtheorem{lemma}[theorem]{Lemma}
\newtheorem{result}[theorem]{Result}
\newtheorem{proposition}[theorem]{Proposition}
\newtheorem{corollary}[theorem]{Corollary}

\theoremstyle{definition}
\newtheorem{definition}[theorem]{Definition}

\theoremstyle{remark}
\newtheorem{remark}[theorem]{Remark}

\numberwithin{equation}{section}

\newcommand{\ol}{\overline}
\newcommand{\eps}{\varepsilon}

\newcommand{\id}{\operatorname{id}}
\newcommand{\Ind}{\operatorname{index}}
\newcommand{\Aut}{\operatorname{\textsf{Aut}}}

\newcommand{\wind}{\operatorname{wind}}

\newcommand{\bdy}{\partial}

\newcommand{\disk}{\mathbb{D}}

\newcommand{\T}{\bdy \disk}

\newcommand{\C}{\mathbb{C}}
\newcommand{\R}{\mathbb{R}}

\newcommand{\D}{\mathbb{D}}

\usepackage{enumitem}

\setlist[enumerate]{
  itemsep=2pt,
  topsep=2pt,
  parsep=0pt,
  partopsep=0pt,
}

\makeatletter
\def\blfootnote{\gdef\@thefnmark{}\@footnotetext}
\makeatother

\begin{document}

\title{Holomorphic mappings of finitely connected planar domains}

\author{Jaikrishnan Janardhanan \thanks{The author is supported by a
grant from ANRF: \textbf{ANRF/ARGM/2025/000619/MTR} \\
\href{mailto:jaikrishnan@iitpkd.ac.in}{jaikrishnan@iitpkd.ac.in}\\
Department of Mathematics, Indian Institute of Technology Palakkad,
Palakkad, Kerala-678623, India} }

\date{\today}

\maketitle

\begin{abstract}
  Results on holomorphic mappings of finitely connected planar domains are often
  proved using Koebe's famous circle mapping theorem. A prominent example is
  Julia's bound on the size of the automorphism group of finitely connected
  planar domains of connectivity $k\geq3$ and the sharp bound obtained later by
  Heins. The purpose of this article is to explore whether it is possible to
  give a proof of these results, and several other related results, without
  using Koebe's theorem. The main result is a rigidity
  theorem for proper holomorphic maps between finitely connected domains of
  connectivity higher than $2$ that immediately yields a bound on the number of
  proper holomorphic mappings between two such domains in terms of their
  connectivities. Using the same ideas, we also provide a new elementary proof
  of the planar Riemann--Hurwitz formula that does not use the Euler
  characteristic. Most of our proofs rely only on the Riemann mapping theorem,
  the Schwarz reflection principle, and the argument principle.
\end{abstract}

\blfootnote{\textup{2020} \textit{Mathematics Subject Classification}:
Primary: 30C35, 32H35; Secondary: 30C20, 30C85.}

\blfootnote{\textit{Keywords and phrases}: proper holomorphic maps,
conformal automorphisms, finitely connected planar domains, harmonic
measure, Schwarz reflection, Riemann--Hurwitz formula.}

\section{Introduction}
The primary purpose of this article is to present an elementary
treatment of some classical and some newer results about conformal and
proper holomorphic mappings of finitely connected planar domains. Our
starting point is our earlier article \cite{bharathi} in which we showed
that any conformal self-map of a planar domain that has $3$ fixed points
must be the identity map. We also gave ``simple'' proofs of several
other classical results about conformal maps. These proofs relied on
Koebe's famous circle mapping theorem that asserts that any bounded
finitely connected domain is biholomorphic to the unit disk with
finitely many points and finitely many pairwise disjoint closed subdisks
removed. We will show in this article that one does \textbf{not} require
Koebe's theorem in the proofs of the results in \cite{bharathi}; see
Remark~\ref{R:old} below. In fact, we will show that the proofs require
only standard facts from complex analysis, namely, the Riemann mapping
theorem, the Schwarz reflection principle, and the argument principle.
It turns out that the same circle of ideas also yields new proofs of
several classical results about conformal mappings of finitely connected
planar domains as well as new results about proper holomorphic mappings
of such domains. 

To explain our results, we need to introduce the following terminology
and notation that we will use throughout the article. A bounded
finitely connected planar domain is called \emph{admissible} if every
nondegenerate boundary component is a smooth real-analytic Jordan
curve. An admissible domain is called \emph{unpunctured} if it has no
degenerate boundary components. Whenever \(G\) and \(D\) are admissible
domains occurring as the source and target of a map, respectively, we
write
\[
  \partial G=\Sigma_0\sqcup\cdots\sqcup\Sigma_{m-1},
  \qquad
  \partial D=\Gamma_0\sqcup\cdots\sqcup\Gamma_{k-1},
\]
where \(\Sigma_0\) and \(\Gamma_0\) are the outer boundary components.
Thus \(m\) and \(k\) denote the connectivities of \(G\) and \(D\),
respectively. We will always use the arc-length
parametrisation for the boundary Jordan curves. It is clear that this
parametrisation is both real analytic and regular. We will abuse
notation and use the same symbol to denote a boundary curve and its
arc-length parametrisation.

Another motivation for this article comes from the study of the
size of the automorphism group of finitely connected domains. A precise
upper bound once again follows immediately from the circle mapping
theorem. This seems to be due to Julia \cite[pp. 68-69]{Julia}.

\begin{result}\label{R:julia}
  Let $D$ be a finitely connected domain of connectivity $k \geq 3$.
  Then
  \[
    |\Aut(D)| \leq k(k-1)(k-2).
  \]
\end{result}

Heins \cite{Heins2} later obtained the best upper bound for the size of
the automorphism group of a finitely connected planar domain in terms
of the connectivity.

\begin{result}[Heins, Theorem~B of \cite{Heins2}]
\label{R:heins}
Let \(D\subset\mathbb C\) be a finitely connected domain of connectivity
\(k \geq 3\). Then $|\Aut(D)| \leq 2k$ except when $k = 4,6,8,12,$ or $20$. In
these cases we have
\[
 |\Aut(D)| \leq
 \begin{cases}
  12,&k=4,\\
  24,&k=6\ \text{or}\ 8,\\
  60,&k=12\ \text{or}\ 20.
 \end{cases}
\]
Each one of these bounds is attained.
\end{result}

Again the key tool used in the proof is Koebe's circle mapping theorem.
At the end of his article, Heins remarks that it would be interesting to
deduce his result without using Koebe's theorem. In an earlier article
\cite{Heins1}, Heins had already outlined a possible approach using
harmonic measure to show the weaker result that the automorphism group
of such domains is finite.

In this article, we give simple proofs of Julia's and Heins's results.
Notably, we do not use Koebe's circle mapping theorem. We now briefly
describe the ideas behind our proofs. Julia's result is straightforward
if every component of the complement of $D$ is a point. In all other cases, by a
standard argument using only the Riemann mapping theorem, we may assume
that the domain $D$ is admissible. One can easily remove the punctures.
The Schwarz reflection principle now shows that the automorphism extends
holomorphically to a larger domain. We now show that the extension of a
nontrivial automorphism cannot have fixed points on the boundary; see
Proposition~\ref{prop:no-boundary-fixed-point} below. We give two proofs
of this proposition. The first uses the compactness of the automorphism
group that follows immediately from the famous Wong--Rosay theorem and
Cartan's uniqueness theorem. The second argument uses the aforementioned
idea of Heins \cite{Heins1}. Once boundary fixed points are excluded,
the argument principle allows us to count the number of interior fixed
points, which turns out to be $2 - b(f)$, where $b(f)$ is the
number of boundary components fixed setwise by the automorphism $f$.
This count yields the required bound. This proof is adapted from the
arguments in the seminal work of He and Schramm; see
\cite[Section~2]{Schramm}. The fixed-point count also allows us to give
a very simple proof of Result~\ref{R:heins}. It is clear that any
automorphism of a finitely connected domain induces a homeomorphism of
the sphere after we identify each boundary component with a point. Thus
the group of automorphisms is a compact group that acts on the sphere. A
result of Kerékjártó \cite{Kerékjártó} now shows that this group must be
conjugate to a subgroup of the special orthogonal group. As the group
under consideration is already finite, the classification of finite
subgroups of the special orthogonal group applies and this finishes the
proof. In our proof, instead of appealing to Kerékjártó's theorem, we 
adapt the classical pole counting argument Klein used to
classify finite subgroups of the special orthogonal group. In our
context, the $2$ fixed points act as the poles.

The same circle of ideas can be used to study
proper holomorphic maps of finitely connected domains. Let \(G\) and
\(D\) be finitely connected planar domains of connectivities \(m\) and
\(k\), respectively, and let $f\colon G\longrightarrow D$ be a proper
holomorphic map of degree \(d\). If \(r\) denotes the number of critical
points of \(f\), counted with multiplicity, then the planar
Riemann--Hurwitz formula says
\[
    m-2=d(k-2)+r.
\]
Steinmetz \cite{Steinmetz} gave an elementary proof of this formula
which avoids the Euler characteristic and proceeds by cutting the
domains along cross-cuts. We give a different proof using the argument
principle. As an immediate consequence, every proper holomorphic
self-map of a planar domain of finite connectivity at least three is an
automorphism. The latter result was shown by Mueller and Rudin
\cite{Rudin}. Their proof uses the fact that the domain in question can
be mapped biholomorphically to a circular-slit domain. While this is
easier to prove than Koebe's circle mapping theorem, it is nevertheless
a highly nontrivial result. Our proof of the Riemann--Hurwitz theorem
requires only the Riemann mapping theorem and the argument principle.

We now come to the main result of this article. We consider the
collection of all proper holomorphic maps between two fixed finitely
connected planar domains. In \cite{Steinmetz}, it is remarked that using
circular-slit uniformization, it is easy to see that this collection is
finite whenever the target has connectivity at least three. A complete
proof along these lines was subsequently given by Hemasundar
\cite{Hemasundar}. A precise bound had already been established by
Jenkins and Suita \cite{Jenkins}, who showed that the number of proper
holomorphic maps from an $m$-connected planar domain, $m \geq 2$, onto a
fixed planar domain of connectivity greater than two is at most
\[
  (m - 2)2^{(4m - 6)},
\]
using circular-slit mappings and extremal metrics, together with the
Riemann--Hurwitz formula. We give a much better bound using harmonic
measures. 
\begin{restatable}{theorem}{Proper}
\label{thm:proper-count}
Let \(G\) and \(D\) be unpunctured admissible domains, and suppose that
\(k\ge3\).
For fixed disjoint collections \(A_1,A_2\) of boundary components of \(G\),
there are at most \(k-2\) proper holomorphic maps \(f:G\to D\) for which
exactly the curves in \(A_j\) are mapped onto \(\Gamma_j\), \(j=1,2\).
Consequently,
\[
 \#\{f:G\to D:f\text{ is proper holomorphic}\} \le\ (k-2)\bigl(3^m-3\cdot2^m+3\bigr).
\]
If $G = D$ then every proper holomorphic map is an automorphism and we
have the bound
\[
  |\Aut(D)| \leq k(k-1)(k-2).
\]
\end{restatable}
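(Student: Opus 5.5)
The plan has three parts: encode a proper map through harmonic measures, show that two maps with the same boundary data must coincide, and then count the possible boundary data.

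\textbf{Boundary behaviour.} Since $G$ and $D$ are admissible and unpunctured, a proper holomorphic $f\colon G\to D$ of degree $d$ extends continuously to $\ol G$ (and holomorphically across $\partial G$ by Schwarz reflection). Each $\Sigma_i$ is mapped onto a single $\Gamma_j$ as a finite covering, and every $\Gamma_j$ is covered.

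\textbf{Harmonic measures.} Let $\omega_j$ be the harmonic measure of $\Gamma_j$ in $D$: it is harmonic in $D$, equal to $1$ on $\Gamma_j$ and $0$ on the other boundary curves. Then $\omega_j\circ f$ is harmonic on $G$, continuous on $\ol G$, equal to $1$ on the curves in $f^{-1}(\Gamma_j)$ and $0$ on the rest. By the maximum principle, $\omega_j\circ f$ is the harmonic measure $\mu_{A_j}$ of $A_j$ in $G$. So if $f$ and $g$ share the data $(A_1,A_2)$, then
\[
\omega_1\circ f=\omega_1\circ g,\qquad \omega_2\circ f=\omega_2\circ g .
\]
Applying $\partial=\partial/\partial z$ gives
\[
(\partial\omega_j\circ f)\,f'=(\partial\omega_j\circ g)\,g' \quad (j=1,2).
\]
Hence the meromorphic function $R=\partial\omega_1/\partial\omega_2$ on $D$ satisfies $R\circ f=R\circ g$.

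\textbf{Fibre bound (main obstacle).} Fix a generic $z_0\in G$. Then $w=f(z_0)$ must satisfy
\[
\omega_1(w)=\mu_{A_1}(z_0),\quad \omega_2(w)=\mu_{A_2}(z_0),\quad R(w)=(\partial\mu_{A_1}/\partial\mu_{A_2})(z_0).
\]
I must show that at most $k-2$ points $w\in D$ satisfy these conditions for generic data. Two facts support this.
\begin{enumerate}
\item Each $\partial\omega_j\,dz$ is real (up to a factor $i$) along $\partial D$ and has exactly $k-2$ zeros in $D$. This follows from the argument principle applied to the tangential derivative along the boundary curves, where $\omega_j$ is constant and its normal derivative has fixed sign.
\item Consequently $R$ is real on $\partial D$ and has at most $k-2$ poles in $D$.
\end{enumerate}
By the argument principle, for non-real $c$ the number of zeros of $R-c$ in $D$ equals the number of poles minus the winding of the real-valued boundary values, and so is at most $k-2$. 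This bounding of solutions of $R=c$ inside $D$ is the step I expect to be hardest. I would try first to make this winding count precise.

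\textbf{Rigidity.} Choose $z_0$ so that $R(f(z_0))$ is non-real and $(\omega_1,\omega_2)$ is a local diffeomorphism at the candidate points; this is possible because the critical sets are discrete or analytic curves. Then there are at most $k-2$ candidate values for $f(z_0)$. If $f(z_0)=g(z_0)$, the map $(\omega_1,\omega_2)$ is locally injective there, so $f=g$ near $z_0$, and $f=g$ on $G$ by the identity theorem. This proves the first assertion.

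\textbf{Counting.} Since $k\ge3$, the set $A_1$, the set $A_2$, and the remaining curves (which map to $\Gamma_0,\Gamma_3,\dots$) are all nonempty. By inclusion–exclusion, the number of ordered pairs of disjoint nonempty $A_1,A_2$ with nonempty complement is $3^m-3\cdot2^m+3$. This gives the stated bound.

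\textbf{The case $G=D$.} The Riemann--Hurwitz formula $k-2=d(k-2)+r$ with $k\ge3$ forces $d=1$ and $r=0$, so $f$ is an automorphism. An automorphism permutes the boundary components bijectively, so $A_1$ and $A_2$ are distinct singletons. That gives $k(k-1)$ choices, and hence $|\Aut(D)|\le k(k-1)(k-2)$.
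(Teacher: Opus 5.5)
Your route is the paper's: pull back harmonic measures, use $R=\partial\omega_1/\partial\omega_2$, pick a non-real value $c$, bound $R^{-1}(c)\cap D$ by $k-2$, get rigidity from local injectivity, then count. The key fact is that $R\circ f=\partial\mu_{A_1}/\partial\mu_{A_2}$ does not depend on $f$. Using Riemann--Hurwitz for $G=D$, instead of the paper's iteration-plus-finiteness argument, is fine. The winding step you flagged is immediate. By the Hopf lemma $\partial\omega_2\neq0$ on $\partial D$, so $R$ has no poles there. On $\partial D$, $R-c$ takes values in the line $\{\operatorname{Im}\zeta=-\operatorname{Im}c\}$, which misses $0$, so each boundary curve has winding $0$. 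Hence $R-c$ has as many zeros in $D$ as $R$ has poles, at most $k-2$. The paper equivalently counts the $k-2$ zeros of the holomorphic function $\partial\omega_1-c\,\partial\omega_2$.

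\textbf{The gap: nonconstancy of $R$.} You never prove that $R$ is nonconstant, and this is exactly where $k\ge3$ is needed; your only use of $k\ge3$ is in counting the classes. Without it, two steps are unjustified:
\begin{itemize}
\item ``choose $z_0$ with $R(f(z_0))$ non-real'';
\item your claim that the critical set of $(\omega_1,\omega_2)$ consists of points and curves.
\end{itemize}
A constant $R$ would be a real constant, since $R$ is real on $\partial D$, and then $(\omega_1,\omega_2)$ would have identically vanishing Jacobian. This really happens with only two boundary components, where the two harmonic measures sum to $1$ and $R\equiv-1$. The fix is short. If $R\equiv c$, then $c\in\R$ and $\partial(\omega_1-c\,\omega_2)=0$, so $\omega_1-c\,\omega_2$ is constant. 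But it equals $1$ on $\Gamma_1$ and $0$ on $\Gamma_0$, a contradiction.

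\textbf{The choice of $z_0$.} This choice must work simultaneously for every $f$ in a family not yet known to be finite, so a genericity remark about critical sets is not enough. Use the identity that, wherever $\partial\omega_2\neq0$, the Jacobian of $(\omega_1,\omega_2)$ equals $4|\partial\omega_2|^2\operatorname{Im}R$. Choose $z_0$ so that the $f$-independent value $\partial\mu_{A_1}/\partial\mu_{A_2}(z_0)=R(f(z_0))$ satisfies three conditions:
\begin{itemize}
\item it is finite;
\item it is non-real;
\item it avoids the finitely many values of $R$ at common zeros of $\partial\omega_1$ and $\partial\omega_2$.
\end{itemize}
Such a $z_0$ exists once $R$ is nonconstant, because $R\circ f$ is then a nonconstant meromorphic function and hence open. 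With this choice every candidate point $f(z_0)$ is a regular point of $(\omega_1,\omega_2)$, uniformly in $f$, and your local-injectivity argument goes through. The paper instead takes $c$ to be a regular value of $R$ and uses local injectivity of $R$ itself.
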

The connectivity of the target being at least three is crucial in our proof as
this allows us to produce a pair of ``linearly independent'' harmonic measures
on the target domain. The above theorem subsumes Julia's theorem
and the aforementioned result by Mueller and Rudin. Similar arguments
can also be used to prove the Riemann--Hurwitz theorem. Therefore, in a strong
sense, Theorem~\ref{thm:proper-count} can be viewed as a unification of the
other results we prove in this article.

The paper is organised as follows. Section~2 collects the basic facts about
proper holomorphic maps needed later, including boundary correspondence and
holomorphic extension across analytic boundary components. Section~3
establishes the fixed-point count for automorphisms and uses it to prove the
bounds of Julia and Heins. Section~4 gives an elementary proof of the planar
Riemann--Hurwitz formula. Finally, Section~5 uses harmonic measure to bound the
number of proper holomorphic maps between two fixed admissible domains. The
polarising adjective \emph{elementary} occurs several times in this article. We
use the word only when what follows is a consequence of the facts usually
presented in basic courses in complex
analysis, real analysis and topology.

\section{Basic facts about proper holomorphic maps of planar domains}

The purpose of this section is to recall well-known results about proper
holomorphic mappings. Recall that a proper map between Hausdorff
topological spaces is a continuous map such that the preimage of every
compact set is compact. For domains in Euclidean spaces, this is easily
seen to be equivalent to the following: a continuous map $f:D\to G$,
where $D\subset\R^n$ and $G\subset\R^m$, is proper if, for any sequence
$x_n\in D$ with no limit points in $D$, the image sequence $f(x_n)$ has
no limit points in $G$. The results in this section are standard, but we
will state them in the form we need. We give proofs whenever we
could not find a source that proves the results in our specific
formulation. We begin with the standard fact that proper holomorphic
mappings between equidimensional complex spaces are finite branched
coverings. An elementary proof in dimension $1$ can be found in
\cite[Theorem~1.1]{Rudin}.

\begin{result}
  Let $G$ and $D$ be domains in $\C$ and let $f:G \to D$ be a
  proper holomorphic map. Then there is an integer $d \ge 1$, called the
  \emph{degree of $f$}, such that for each $w \in D$, the preimage
  $f^{-1}(w)$ comprises exactly $d$ points, counting multiplicities. In
  particular, $f$ is surjective.
\end{result}

As this paper deals mostly with finitely connected planar domains, we
will make use of the following simple consequence of the Riemann mapping
theorem that allows us to restrict attention to bounded domains in $\C$
that are admissible. A proof can be found in
\cite[Theorem~9.29]{Zakeri}.

\begin{result}
  Let $D$ be a bounded finitely connected domain in $\C$ of connectivity
  $k$, i.e., the complement of $D$ in the Riemann sphere has exactly $k$
  connected components. Then $D$ is biholomorphic to an admissible
  subdomain of the unit disc $\D$ whose outer boundary component is $\T$.
\end{result}

Henceforth, we will represent all finitely connected planar domains by
admissible domains as in the above result. As one anticipates, the
punctures can be removed without much difficulty. To get to this, we
need to first study what happens to the boundary components under a
proper holomorphic map. As we have assumed that the nondegenerate
boundary components are all real-analytic Jordan curves, our task is
substantially simplified thanks to the Schwarz reflection principle. We
will rely on the following version from Lang's elegant treatment of the
reflection principle in \cite[Chapter~IX]{Lang}. The merit of this
version is that it does not require the map in question to extend
continuously to the closure, and a continuous extension is a byproduct
of the proof.

\begin{result}[a paraphrase of Theorem~2.5 in Chapter~IX of \cite{Lang}]\label{R:lang}
  Let $G$ be a simply connected unpunctured admissible domain and let
  $f:G \to \disk$ be the Riemann map. Then $f$ extends as a
  biholomorphic map from a domain
  $\widetilde{G}$ that contains $\ol{G}$ onto the disc
  $\{z\in\C:|z|<R\}$ for some $R>1$.
\end{result}

The above result immediately shows that we can find a nice analytic
collar around a smooth real-analytic Jordan curve.
\begin{corollary}
 Let $\Gamma\subset\C$ be a smooth real-analytic Jordan curve. We can
 find numbers $R,r$ with \(0<r<1<R\) and an injective holomorphic map
\[
      \chi:\{z \in \C: r<|z|<R\}\longrightarrow\C
\]
such that \(\chi(\bdy \disk)=\Gamma\) and $\chi\left(\{z: r < |z| < 1\}\right)$
is contained in the bounded component determined by $\Gamma$.
\end{corollary}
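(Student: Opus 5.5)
We apply Result~\ref{R:lang} to the bounded component of $\C\setminus\Gamma$. By the Jordan curve theorem, $\C\setminus\Gamma$ has exactly one bounded component $U$. It is a simply connected Jordan domain whose boundary is the smooth real-analytic Jordan curve $\Gamma$, so it is a simply connected unpunctured admissible domain.

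Result~\ref{R:lang} gives a Riemann map $f\colon U\to\disk$ that extends to a biholomorphism $F\colon\widetilde U\to\{|z|<R\}$, where $\widetilde U$ is a domain containing $\ol U$ and $R>1$. Its inverse $\psi=F^{-1}\colon\{|z|<R\}\to\widetilde U$ is injective and holomorphic. Since $F$ extends $f$, which maps $U$ onto $\disk$, we have $\psi(\disk)=U$. Because $F$ is a homeomorphism of $\widetilde U$ onto the disc of radius $R$, it carries $\ol U=U\cup\Gamma$ onto $\ol{\disk}$. Hence $\psi(\bdy\disk)=\Gamma$.

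Now fix any $r$ with $0<r<1$ and let $\chi$ be the restriction of $\psi$ to the annulus $\{r<|z|<R\}$. Then $\chi$ is injective and holomorphic, $\chi(\bdy\disk)=\Gamma$, and $\chi(\{r<|z|<1\})\subset\psi(\disk)=U$, which is the bounded component determined by $\Gamma$.

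The only point that needs care is the identity $\psi(\bdy\disk)=\Gamma$. It follows from the continuity and injectivity of $F$ on $\widetilde U\supset\ol U$: $F$ maps the compact set $\ol U$ onto a compact subset of the disc of radius $R$ that contains $\disk$ and is contained in its closure, so $F(\ol U)=\ol{\disk}$. Since $F(U)=\disk$ and $F$ is injective, $F(\Gamma)=\bdy\disk$, and therefore $\psi(\bdy\disk)=\Gamma$.
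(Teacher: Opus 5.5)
Your proof is correct and is the argument the paper has in mind when it says the corollary follows immediately from Result~\ref{R:lang}: apply that result to the interior Jordan domain of $\Gamma$, invert the extended Riemann map, and restrict the inverse to an annulus $\{r<|z|<R\}$. Your compactness-and-injectivity check that the extension carries $\Gamma$ onto $\bdy\disk$ supplies the one detail the paper leaves implicit.
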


\begin{remark}
 Let $D$ be an unpunctured admissible domain.
 Each boundary component gives rise to a biholomorphism from an annulus.
 By shrinking the annuli, we can also assume that the images of the
 annuli are pairwise disjoint. Furthermore, by precomposing with the
 appropriate inversion, we can ensure that each map $\chi$ sends the portion
 of its annulus inside the unit disc into the domain $D$. Having made
 these adjustments, we will refer to the images of $\{z: r < |z| < 1\}$
 under $\chi$ as \emph{one-sided analytic collars}.
\end{remark}

We now analyse what happens to the boundary components under a proper holomorphic
map. The following lemma is also recorded in
\cite[Section~3]{Rudin} in a different formulation.

\begin{lemma}\label{L:bdycorres} Let $G$ and $D$ be admissible domains
  and let $f:G \to D$ be a proper holomorphic map. Then, for each
  $i\in\{0,\ldots,m-1\}$, we can find a fixed $j\in\{0,\ldots,k-1\}$
  such that as $z\to\Sigma_i$ we have $f(z)\to\Gamma_j$. In particular,
  $k\leq m$.
\end{lemma}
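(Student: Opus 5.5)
The plan is to show that the cluster set of $f$ at each boundary component $\Sigma_i$ is a connected subset of $\partial D$. Such a set lies in a single component $\Gamma_j$. Surjectivity of $f$ together with properness will then force every $\Gamma_j$ to be hit, which gives $k\le m$.

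\emph{Neighbourhood bases.} First I fix, for each $i$, a decreasing family of connected "boundary layers" $A_n^{(i)}\subset G$ near $\Sigma_i$.
\begin{enumerate}
\item If $\Sigma_i$ is nondegenerate, use the one-sided analytic collar $\chi_i$ from the Corollary and Remark above, adjusted by an inversion if needed, and set $A_n^{(i)}=\chi_i(\{r_n<|z|<1\})$ with $r_n\uparrow 1$.
\item If $\Sigma_i=\{p\}$ is a puncture, take small punctured discs about $p$.
\end{enumerate}
Each $A_n^{(i)}$ is connected. The key geometric point is that any sequence $z_\nu\in G$ with $\operatorname{dist}(z_\nu,\Sigma_i)\to0$ eventually lies in every $A_n^{(i)}$. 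This holds because $A_n^{(i)}\cup\Sigma_i$ contains a neighbourhood of $\Sigma_i$ intersected with $G$, using that $\chi_i$ is a homeomorphism of a two-sided annulus onto a neighbourhood of $\Sigma_i$ and that the collars are disjoint from the other boundary components.

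\emph{Cluster set.} Put $C_i=\bigcap_n \overline{f(A_n^{(i)})}$. Since $D$ is bounded, each $\overline{f(A_n^{(i)})}$ is a nonempty compact connected set, and the sets decrease in $n$. A decreasing intersection of nonempty compact connected sets is nonempty, compact and connected, so $C_i$ is all three.

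By properness, $C_i\cap D=\emptyset$. Indeed, if $w\in C_i\cap D$, pick $z_n\in A_n^{(i)}$ with $f(z_n)\to w$. The sequence $z_n$ has no limit point in $G$, yet its image converges in $D$, which contradicts properness. Hence $C_i\subset\partial D=\Gamma_0\sqcup\dots\sqcup\Gamma_{k-1}$.

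The $\Gamma_j$ are pairwise disjoint compact sets, so a connected subset of their union lies in exactly one of them; call it $\Gamma_{j(i)}$.

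\emph{Convergence.} Suppose $f(z)\not\to\Gamma_{j(i)}$ as $z\to\Sigma_i$. Then there exist $\varepsilon>0$ and $z_\nu\to\Sigma_i$ with $\operatorname{dist}(f(z_\nu),\Gamma_{j(i)})\ge\varepsilon$. By the geometric point above, $z_\nu$ eventually lies in every layer $A_n^{(i)}$. By compactness of $\overline D$, a subsequence of $f(z_\nu)$ converges to a point of $C_i$ that is not in $\Gamma_{j(i)}$, a contradiction.

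\emph{$k\le m$.} Fix $j$ and choose $w_n\in D$ with $w_n\to\Gamma_j$. Since $f$ is surjective, pick $z_n$ with $f(z_n)=w_n$. By properness, $z_n$ has no limit point in $G$. Boundedness of $G$ then gives $\operatorname{dist}(z_n,\partial G)\to0$, and since there are finitely many components, a subsequence tends to a single $\Sigma_i$. Then $w_n\to\Gamma_{j(i)}$, so $j=j(i)$. Thus $i\mapsto j(i)$ is onto, and $k\le m$.

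I expect the main obstacle to be the geometric claim that points of $G$ near $\Sigma_i$ lie in the collar. It needs a short argument using that $\chi_i$ extends injectively across $\partial\mathbb D$, so that its image is an open neighbourhood of $\Sigma_i$ whose inner part is exactly the part lying in $G$. Everything else is soft topology.
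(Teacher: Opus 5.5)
Your proof is correct. It uses the same two ingredients as the paper: properness, and the connectedness of the image of a collar around $\Sigma_i$. The organisation differs, however.

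The paper also places one-sided collars $\mathcal V_j$ around the boundary components of the \emph{target}. It notes that $H=D\setminus\bigcup_j\mathcal V_j$ is compact, picks a source collar $\mathcal U_i$ disjoint from the compact set $f^{-1}(H)$, and concludes that the connected set $f(\mathcal U_i)$ lies in a single $\mathcal V_j$. It then treats the convergence $f(z)\to\Gamma_j$ and the inequality $k\le m$ as immediate.

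Your cluster set $C_i=\bigcap_n\overline{f(A_n^{(i)})}$ needs no collar structure on the target. It only needs $\partial D$ to be a finite disjoint union of compact sets, and the convergence statement follows directly from $C_i\subset\Gamma_{j(i)}$. Your last paragraph also spells out the pull-back argument behind $k\le m$, which the paper compresses into ``immediate from the fact that $f$ is surjective''. The paper's route is shorter; yours is more self-contained and slightly more general on the target side.

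A minor point about that last step: the fact that $(z_n)$ has no limit point in $G$ follows from continuity of $f$, not from properness. A limit point $z^*\in G$ would force $f(z^*)\in\Gamma_j$, which is impossible since $f(z^*)\in D$.
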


\begin{proof}
  For a nondegenerate boundary component $\Gamma_j$, let $\mathcal{V}_j$
  denote the one-sided analytic collar around $\Gamma_j$ in $D$ obtained
  from the previous corollary. In the case that $\Gamma_j$ is a degenerate
  boundary component, we can take $\mathcal{V}_j$ to be a small
  punctured disc around the point $\Gamma_j$. By shrinking, we may assume
  that the collars $\mathcal{V}_j$ are pairwise disjoint. Let $H :=
  D \setminus \bigcup_{j=0}^{k-1} \mathcal{V}_j$. Then $H$ is a compact
  subset of $D$. Since $f$ is proper, the preimage $f^{-1}(H)$ is a
  compact subset of $G$. Let $\mathcal{U}_i$ be a small enough one-sided
  analytic collar (or punctured disc) around $\Sigma_i$ in $G$ that is
  disjoint from $f^{-1}(H)$. Then $f(\mathcal{U}_i)$ is a connected
  subset of $D$ that is disjoint from $H$. Hence, $f(\mathcal{U}_i)$ is
  contained in one of the collars $\mathcal{V}_j$. The rest of the lemma now
  follows. That $k\leq m$ is immediate from the fact that $f$ is
  surjective.
\end{proof}

\begin{remark}
  A proper holomorphic map $f:G\to D$ between two admissible domains
  induces a map $\sigma_f:\{0,\ldots,m-1\}\to\{0,\ldots,k-1\}$ that
  associates to each boundary component of $G$ a boundary component of
  $D$.
\end{remark}

We will now use the Schwarz reflection
principle to show that a proper holomorphic map between two unpunctured
admissible domains
extends holomorphically to a neighbourhood of the closure of the domain.

\begin{theorem}\label{thm:proper-extension} Let \(G\) and \(D\) be
unpunctured admissible domains. Every proper holomorphic map \(f:G\to D\)
extends holomorphically to a neighbourhood of \(\ol G\).
The extension maps \(\partial G\) into \(\partial D\), has no critical
point on \(\partial G\), and maps each boundary component of \(G\) onto
a boundary component of \(D\) as an orientation-preserving finite
covering.
\end{theorem}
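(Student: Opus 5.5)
The plan is to work locally near each boundary component, using the one-sided analytic collars and Lemma~\ref{L:bdycorres}, and to reduce to the case of maps between annuli near the unit circle, where the ordinary Schwarz reflection principle applies. Fix a boundary component $\Sigma_i$ of $G$ and let $\Gamma_j$, $j=\sigma_f(i)$, be the component given by Lemma~\ref{L:bdycorres}. Take the collar maps $\chi_\Sigma$ for $\Sigma_i$ and $\chi_\Gamma$ for $\Gamma_j$ from the Corollary. Choose a one-sided collar $\mathcal U_i$ small enough that $f(\mathcal U_i)$ lies in the collar $\mathcal V_j$, as in the proof of Lemma~\ref{L:bdycorres}. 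Then
\[
g:=\chi_\Gamma^{-1}\circ f\circ\chi_\Sigma
\]
is holomorphic on a thin annulus $\{r'<|z|<1\}$, takes values in $\{r<|w|<1\}$, and satisfies $|g(z)|\to1$ as $|z|\to1$. This last property holds because $f(z)\to\Gamma_j$ as $z\to\Sigma_i$.

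Next I will extend $g$ across the unit circle. Define $g(z)=1/\overline{g(1/\bar z)}$ for $1<|z|<1/r'$. The classical reflection principle, applied to $\log|g|$, gives a holomorphic extension of $g$ to $\{r'<|z|<1/r'\}$. Concretely, $\log|g|$ is harmonic and tends to $0$ at $|z|=1$, so it extends by odd reflection; locally $g$ is then $\exp$ of a holomorphic function whose real part is $\log|g|$, and the extension matches the reflected formula. The extended $g$ maps the circle into the circle. Setting $f=\chi_\Gamma\circ g\circ\chi_\Sigma^{-1}$ near $\Sigma_i$ then extends $f$ holomorphically across $\Sigma_i$, with $f(\Sigma_i)\subset\Gamma_j$. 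Doing this for all finitely many components and taking the union with $G$ gives the neighbourhood of $\ol G$.

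The main obstacle is the local structure on the circle. Write $g(e^{it})=e^{i\theta(t)}$ with $\theta$ real analytic. Because $|g|<1$ inside and $|g|=1$ on the circle, the Hopf lemma, or equivalently the positivity of the normal derivative of the positive harmonic function $-\log|g|$, gives $\partial_r\log|g|>0$ on $|z|=1$. Via the Cauchy--Riemann equations in polar form, this radial derivative equals $\theta'(t)$, so $\theta'>0$. If one wants to avoid Hopf, an alternative is a local argument: if $g'(z_0)=0$ at some $|z_0|=1$, then near $z_0$ the map $g$ behaves like $c(z-z_0)^n$ with $n\ge2$, and it would send points of the inner side of the circle to the outer side, contradicting $|g|<1$ inside. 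This alternative also rules out critical points on $\Sigma_i$. In either form, $\theta'>0$ means that $g$, and hence $f$, restricted to the circle is a local diffeomorphism preserving orientation, since the collars are normalised so that the inside of the disc corresponds to the domain.

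A local diffeomorphism from one circle to another is a covering map, and it is finite because the source is compact. So $f|_{\Sigma_i}$ is a finite covering onto $\Gamma_j$; in particular it is surjective onto $\Gamma_j$.
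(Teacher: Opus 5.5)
Your proposal is correct and takes essentially the paper's route: conjugate $f$ by one-sided analytic collars to a holomorphic map $g$ of thin annuli with $|g|\to1$ at the unit circle, extend $g$ by Schwarz reflection, and then exclude boundary critical points. For that last step the paper uses exactly your ``alternative'' local normal-form argument; your main route, the Hopf lemma for $-\log|g|$ combined with the polar Cauchy--Riemann equations to get $\theta'>0$, also makes explicit the orientation-preserving local-diffeomorphism property that the paper calls elementary.
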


\begin{proof}

Fix a boundary component \(\Sigma_i\) of \(G\), and let
\(\Gamma_{\sigma(i)}\) be the component associated with it by the previous
lemma. Choose one-sided analytic collars
\[
 \alpha:\{r<|\zeta|<1\}\to U_{\Sigma_i},
 \qquad
 \beta:\{s<|\eta|<1\}\to U_{\Gamma_{\sigma(i)}},
\]
around $\Sigma_i$ and $\Gamma_{\sigma(i)}$, respectively. By shrinking the
collar $U_{\Sigma_i}$ if necessary, we may assume that
\(f(U_{\Sigma_i})\subset U_{\Gamma_{\sigma(i)}}\). The map
\[
        H(\zeta)=\beta^{-1}\!\bigl(f(\alpha(\zeta))\bigr)
\]
is well-defined and holomorphic on the annulus $\{r<|\zeta|<1\}$.
Furthermore, as $|\zeta| \to 1$, we have $|H(\zeta)| \to 1$.
Thus the Schwarz reflection principle gives a holomorphic extension of
\(H\) to a neighbourhood of \(\T\) (see the proof of
Result~\ref{R:lang}). Composing with \(\beta\) and \(\alpha^{-1}\) gives
a holomorphic extension of \(f\) to a neighbourhood of \(\Sigma_i\) that maps
\(\Sigma_i\) into \(\Gamma_{\sigma(i)}\). Repeating this for each boundary
component of \(G\)
gives a holomorphic extension of \(f\) to a neighbourhood of \(\ol G\)
that maps \(\partial G\) into \(\partial D\). We continue to call this
extension \(f\).

Let $p \in \bdy G$ and $q := f(p)$. By applying a biholomorphic change
of coordinates near $p$ and $q$, we may assume that \(p=q=0\) and that
$f$ maps a small disc centred at $0$ into another small disc centred
at $0$ such that the upper semidisc is mapped into the upper half-plane
and the real axis is mapped into the real axis. Let $\ell$ be the
multiplicity of $f$ at $0$. We can write
\[
  f(z) = a_\ell z^\ell + O(z^{\ell+1}) \quad \text{as } z \to 0,
\]
where $a_\ell \neq 0$. Furthermore, as $f$ maps the real axis into
itself, we must have $a_\ell \in \R$. If $\ell > 1$, then it is clear
that for an appropriate choice of $z$ sufficiently close to $0$ in the
upper semidisc, we have $f(z)$ in the lower half-plane. This contradicts
the fact that $f$ maps the upper semidisc into the upper half-plane.
Therefore, each $f|_{\Sigma_i}$ is an open map and its image is a
connected subset of $\Gamma_{\sigma(i)}$. Hence $f|_{\Sigma_i}$ is
surjective onto $\Gamma_{\sigma(i)}$. The fact that $f|_{\Sigma_i}$ is
an orientation-preserving covering map onto $\Gamma_{\sigma(i)}$ is
elementary.

\end{proof}

By applying Theorem~\ref{thm:proper-extension} to both a biholomorphism
and its inverse, the following corollary is immediate.

\begin{corollary}
\label{cor:biholo-extension}
If \(f:G\to D\) is biholomorphic between unpunctured admissible domains,
then \(f\) extends biholomorphically from
a neighbourhood of \(\ol G\) onto a neighbourhood of \(\ol D\).
\end{corollary}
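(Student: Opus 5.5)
Apply Theorem~\ref{thm:proper-extension} to $f$ and to $f^{-1}$; both are proper holomorphic maps between unpunctured admissible domains, since biholomorphisms are proper. This yields an open set $U\supset\ol G$ and a holomorphic extension $F\colon U\to\C$ of $f$. It also yields an open set $V\supset\ol D$ and a holomorphic extension $H\colon V\to\C$ of $f^{-1}$. Moreover, $F(\bdy G)\subset\bdy D$ and $H(\bdy D)\subset\bdy G$.

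Next I check that $H\circ F=\id$ near $\ol G$. Since $F(\ol G)=\ol D\subset V$ and $\ol G$ is compact, continuity gives an open set $U'$ with $\ol G\subset U'\subset U$ and $F(U')\subset V$. We may take $U'$ to be a union of $G$ with small discs centred at boundary points, so that each component of $U'$ meets $G$; since $\ol G$ is connected, $U'$ can be taken connected. On $U'$ the map $H\circ F$ is holomorphic and agrees with the identity on the open set $G$. By the identity theorem, $H\circ F=\id$ on $U'$. Symmetrically, there is a connected open set $V'$ with $\ol D\subset V'\subset V$ and $H(V')\subset U$, on which $F\circ H=\id$.

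Finally I produce a neighbourhood of $\ol G$ that $F$ maps biholomorphically onto a neighbourhood of $\ol D$. Set $W:=U'\cap F^{-1}(V')$. This is an open set containing $\ol G$, because $F(\ol G)\subset\ol D\subset V'$. On $W$, $F$ is injective, since $H\circ F=\id$ there. Its image $F(W)$ is open by the open mapping theorem, as $F$ is nonconstant on each component. $F(W)$ contains $\ol D$, since $\ol D=F(\ol G)$; here surjectivity onto the boundary comes from Theorem~\ref{thm:proper-extension}. Hence $F\colon W\to F(W)$ is a bijective holomorphic map, and its inverse is $H|_{F(W)}$, which is holomorphic.

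If one wants the neighbourhoods to be connected, replace $W$ by its component containing the connected set $\ol G$. The only mildly delicate point is this bookkeeping of domains, which ensures the compositions are defined and the identity theorem applies on connected sets. It is routine.
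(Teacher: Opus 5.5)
Your proof is correct and is the paper's argument: apply Theorem~\ref{thm:proper-extension} to $f$ and to $f^{-1}$, with the identity-theorem bookkeeping that the paper calls ``immediate'' written out in full. One small attribution fix: justify $\ol D\subset F(\ol G)$ by noting that $F(\ol G)$ is compact and contains $D$, or via $F\circ H=\id$ on $\ol D$ together with $H(\bdy D)\subset\bdy G$, since Theorem~\ref{thm:proper-extension} by itself only says that each component of $\bdy G$ covers \emph{some} component of $\bdy D$.
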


Now we can show that a proper holomorphic map between two finitely
connected planar domains extends to a proper holomorphic map between the
domains obtained by filling in the punctures of the two domains.

\begin{lemma}
  Let $G$ and $D$ be admissible domains and let
  $f:G \to D$ be a proper holomorphic map. Then $f$ extends to
  a proper holomorphic map between the domains obtained by filling in the
  punctures of $G$ and $D$, respectively.
\end{lemma}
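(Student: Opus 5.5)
We would use Lemma~\ref{L:bdycorres} together with Riemann's removable singularity theorem. Let $\hat G$ and $\hat D$ be the domains obtained by filling in the punctures of $G$ and $D$. Fix a degenerate boundary component $\Sigma_i=\{p\}$ of $G$. By Lemma~\ref{L:bdycorres} there is a fixed $j$ with $f(z)\to\Gamma_j$ as $z\to p$, i.e.\ $\operatorname{dist}(f(z),\Gamma_j)\to0$.

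The first step is to show that $\Gamma_j$ is also degenerate, say $\Gamma_j=\{q\}$. Suppose instead that $\Gamma_j$ is a nondegenerate analytic Jordan curve. Take a one-sided analytic collar $\beta:\{s<|\eta|<1\}\to U_{\Gamma_j}$ and a small punctured disc $\mathcal U$ around $p$ with $f(\mathcal U)\subset U_{\Gamma_j}$. Then $h:=\beta^{-1}\circ f$ is holomorphic and bounded on $\mathcal U$, so it extends holomorphically to $p$. Moreover $|h(z)|\to1$ as $z\to p$, so $|h(p)|=1$. But $|h|<1$ on $\mathcal U$, so $|h|$ attains an interior maximum at $p$. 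By the maximum principle $h$ is constant, which contradicts the fact that $f$ is proper, hence surjective and nonconstant. So $\Gamma_j=\{q\}$ is a puncture. We then have $f(z)\to q$ as $z\to p$, and since $f$ is bounded near $p$, Riemann's theorem shows that setting $f(p)=q$ gives a holomorphic extension. We do this at every puncture of $G$ and obtain a holomorphic map $\hat f:\hat G\to\hat D$.

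The second step is properness of $\hat f$, which we check with the sequential criterion. Let $z_n\in\hat G$ have no limit point in $\hat G$. Passing to a subsequence, $z_n$ accumulates only at nondegenerate components of $\partial G$. Such points also have no limit point in $G$, so properness of $f$ tells us that $f(z_n)$ has no limit point in $D$. We must also rule out $f(z_n)$ accumulating at a filled-in puncture $q$ of $D$. For this we use Lemma~\ref{L:bdycorres} again: near each nondegenerate $\Sigma_i$, $f$ tends to the single component $\Gamma_{\sigma(i)}$. By the first step, a nondegenerate $\Sigma_i$ cannot be sent to a puncture. Indeed, if $f(z)\to q$ as $z\to\Sigma_i$, then $f$ would extend continuously to $\Sigma_i$ with the constant value $q$, and Schwarz reflection in a collar (or simply the identity principle applied to the bounded function $f-q$, which tends to $0$ along an analytic arc after reflection) forces $f\equiv q$. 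Hence $f(z_n)$ approaches nondegenerate components of $\partial D$, which are disjoint from $\hat D$. So $\hat f(z_n)$ has no limit point in $\hat D$.

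I expect the main obstacle to be the two ``type-matching'' facts: punctures go to punctures, and curves go to curves. The argument for curves going to curves needs the most care. The cleanest route is to use the collar map of the target puncture, $\log|f-q|$ type bounds, or simply the reflection of $H=\beta^{-1}\circ f\circ\alpha$ where $\beta$ is now a punctured-disc chart; the boundary values then tend to $0$ rather than to the unit circle. In that case I would apply the identity principle to the extended function $f-q$ on a collar of $\Sigma_i$, viewed as a bounded holomorphic function whose boundary limits vanish identically on an arc. Privalov's uniqueness theorem is avoided because the analytic collar lets us reflect $\overline{\,\cdot\,}$-symmetrically after composing with $\alpha$.
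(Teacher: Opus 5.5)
Your proof is correct and follows essentially the same route as the paper. It extends across the punctures by Riemann's theorem, shows punctures of $G$ land on punctures of $D$, and uses Schwarz reflection in a collar plus the identity theorem to rule out a nondegenerate $\Sigma_i$ being sent to a puncture, from which properness follows. The only differences are minor: the paper gets ``punctures go to punctures'' in one line from the open mapping theorem applied to the extended map at $p$, whereas you use Lemma~\ref{L:bdycorres} and the maximum modulus principle for $\beta^{-1}\circ f$. You also write out the sequential properness check that the paper calls evident.
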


\begin{proof}
  Riemann's removable singularity theorem implies that $f$ extends
  holomorphically to the punctures of $G$. Denote this extension by
  $\widetilde{f}$. Let $p$ be a puncture of $G$. By properness,
  $\widetilde{f}(p) \in \bdy D$. Furthermore, $\widetilde{f}(p)$ must be
  a puncture of $D$, for otherwise the open mapping theorem would be
  violated. Let $\widetilde{G}$ be the domain obtained by filling in the
  punctures of $G$ and let $\widetilde{D}$ be $\widetilde{f}(\widetilde
  G)$. We need to show that $\widetilde{D}$ has no punctures. Let $p$ be
  a puncture of $\widetilde{D}$. We can find a sequence $\{z_n\} \subset
  G$ such that $\widetilde{f}(z_n) \to p$. We may assume that $\{z_n\}$
  converges to a point on some nondegenerate boundary component of $G$,
  say $\Sigma_i$. Consequently, Lemma~\ref{L:bdycorres} shows that for any
  sequence $\{w_n\} \subset G$ such that $w_n \to \Sigma_i$, we have
  $\widetilde{f}(w_n) \to p$. We now apply the Schwarz reflection principle
  to the one-sided analytic collar around $\Sigma_i$ to see that
  $\widetilde{f} - p$ extends holomorphically to a neighbourhood of
  $\Sigma_i$. This contradicts the identity theorem, and hence $\widetilde{D}$
  has no punctures. We have, in fact, shown that the nondegenerate
  boundary curves of $G$ \emph{cannot} be mapped to the punctures of
  $D$. It is now evident that $\widetilde{f}$ is proper.
\end{proof}

For the rest of the paper, we will primarily consider unpunctured
admissible domains whose outer boundary component is the circle. We
will, without explicit mention, assume that any proper holomorphic map
between two such domains extends beyond the boundary and will denote the
extension by the same symbol.

\section{Bound on the size of the automorphism group}\label{s:aut}

We begin by showing that nontrivial automorphisms cannot have fixed
points on the boundary. We will give two proofs of this fact. The first
requires the following general result about the extension of
automorphisms that holds in all dimensions. We would like to
emphasise that the proof of this result is not particularly difficult, and the
most sophisticated tool used is Baire's category theorem.

\begin{result}[\cite{JK}]\label{R:Autext}
  Let $D \subset \C^n$ be a bounded domain. Assume that the automorphism
  group $\Aut(D)$ is compact and that each $f \in \Aut(D)$ extends
  holomorphically to some open set that contains $\ol{D}$. Then there
  exists an open neighbourhood $U$ of $\overline{D}$, which we can take
  to be a bounded domain, such that each $f \in \Aut(D)$ extends
  to $U$ as an automorphism.
\end{result}

In order to apply the above result, we need to know that the
automorphism group in our situation is compact. This is a version of the
famous Wong--Rosay theorem; see \cite[Theorem~12.2.3]{Krantz} for a
proof.

\begin{result}[Wong--Rosay in dimension $1$]
  Let $D$ be an unpunctured admissible domain of connectivity at least $2$.
  Then
  $\Aut(D)$ is compact.
\end{result}

We also require Cartan's uniqueness theorem; see
\cite[Proposition~12.1.1]{Krantz} for a proof.

\begin{result}\label{R:cartan}
  Let $D$ be a bounded domain and let $f:D \to D$ be holomorphic. If for
  some $a \in D$, we have $f(a) = a$, then $|f'(a)| \leq 1$. We have
  $|f'(a)| = 1$ if and only if $f$ is an automorphism. Furthermore, if
  $f'(a) = 1$, then $f \equiv \id$.
\end{result}

We now come to the key proposition that automorphisms other than the
identity cannot have fixed points on the boundary. The second proof,
which uses harmonic measure, is inspired by the arguments in a paper by
Heins \cite{Heins1}.

\begin{proposition}
\label{prop:no-boundary-fixed-point}
Let \(D\) be an unpunctured admissible domain of connectivity \(k\geq2\). If
\(f\in\Aut(D)\) fixes a point of \(\partial D\), then \(f=\id_D\).
\end{proposition}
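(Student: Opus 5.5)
The plan is to follow the first route indicated in the text: use compactness of $\Aut(D)$ together with Result~\ref{R:Autext} and Cartan's uniqueness theorem. By the Wong--Rosay result, $\Aut(D)$ is compact. By Theorem~\ref{thm:proper-extension}, each automorphism extends holomorphically past $\ol D$. Result~\ref{R:Autext} then gives a bounded domain $U\supset\ol D$ such that every element of $\Aut(D)$ extends to an automorphism of $U$. Suppose $f\in\Aut(D)$ fixes $p\in\partial D$. Then the extension of $f$ is an automorphism of the bounded domain $U$ with the interior fixed point $p\in U$. By Result~\ref{R:cartan}, $|f'(p)|=1$, so it suffices to show $f'(p)=1$; Cartan's theorem then gives $f\equiv\id$ on $U$, hence on $D$.

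To compute $f'(p)$, use the local picture from the proof of Theorem~\ref{thm:proper-extension}. Since $f(p)=p$, the component $\Gamma_j$ through $p$ is mapped onto itself. After a biholomorphic change of coordinates near $p$ we may take $p=0$, with $\Gamma_j$ corresponding to the real axis and $D$ locally to the upper half-plane. Because the same chart is used at the source and the target, the conjugated map $g$ still fixes $0$ and has the same derivative there. As in that proof, $g$ maps the real axis into itself and the upper half-plane into the upper half-plane, so $g'(0)$ is real and positive. (If it were negative, points of the upper half-plane near $0$ would be sent to the lower half-plane.) Combined with $|g'(0)|=|f'(p)|=1$, this forces $f'(p)=1$, and we are done.

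The main obstacle is making sure Result~\ref{R:Autext} really applies, namely that all of $\Aut(D)$ extends to one common neighbourhood on which each extension is an automorphism of $U$ and not merely a holomorphic self-map. This is exactly what that result provides, once compactness comes from Wong--Rosay (which needs $k\ge2$). One must also check that a real positive derivative is invariant under conjugation by the chart, which holds because the derivative at a fixed point is conjugation-invariant.

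As a backup, I would try a Heins-style argument via harmonic measure. Let $\omega$ be the harmonic measure of $\Gamma_j$; it is nonconstant because $k\ge2$. Then $\omega\circ f$ and $\omega$ agree, since $f$ permutes boundary components and fixes $\Gamma_j$. Using the finite order of $f$ restricted to $\Gamma_j$, one would derive a rotation number argument showing $f|_{\Gamma_j}$ is the identity, and conclude by the identity theorem. I would pursue this only if the first route fails.
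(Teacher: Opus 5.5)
Your proposal is correct and is essentially the paper's first proof. You use Wong--Rosay and Result~\ref{R:Autext} to extend $f$ to an automorphism of a neighbourhood $U$ of $\ol D$, then argue that $f'(p)>0$ because $f$ preserves $\Gamma_j$, its orientation and the side of $D$, and conclude with Cartan's uniqueness theorem. You are slightly more careful than the paper in noting that $|f'(p)|=1$, not merely $|f'(p)|\le 1$, since the extension is an automorphism of $U$; that equality is exactly what forces $f'(p)=1$.
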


\begin{proof}
Suppose that \(f(a)=a\), and let \(\Gamma_i\) be the boundary
component containing \(a\). It is clear that $f(\Gamma_i)=\Gamma_i$.
By Result~\ref{R:Autext}, $f$ extends to an automorphism of some
larger domain $\widetilde{D}$ that contains $\overline{D}$. By Cartan's
uniqueness theorem, we must have $|f'(a)| \leq 1$. Now $f(\Gamma_i)
= \Gamma_i$ and $f$ is orientation preserving as well. This forces
$f'(a) = 1$ and $f \equiv \id$ by the second part of Cartan's
uniqueness theorem.

We now come to the second proof. Let \(u\) be the harmonic measure of
\(\Gamma_i\). More precisely, \(u\) is the unique harmonic function on
\(D\) that is continuous on \(\overline D\) and satisfies
\[
    u=1\quad\text{on }\Gamma_i,
    \qquad
    u=0\quad\text{on }\partial D\setminus\Gamma_i.
\]
The functions \(u\circ f\) and \(u\) have the same boundary values.
Hence, uniqueness of the solution to the Dirichlet problem gives
$u\circ f \equiv u$.

We will now show that $u$ has regular values near $1$. First observe
that the reflection principle for harmonic functions guarantees that $u$
extends harmonically to a larger domain. Now consider the holomorphic
function $g:=u_x-i u_y$. Since \(u\) is nonconstant, \(g\not\equiv0\);
hence the zero set of $g$ is discrete and cannot accumulate at
$\Gamma_i$. We may thus choose $\eps > 0$ such that \(t:=1-\eps\) is a
regular value of \(u\). In fact, it is easy to see that we can choose
$\eps$ to ensure that the level set of \(u\) corresponding to the
regular value \(t\) is a real-analytic Jordan curve that separates
\(\Gamma_i\) from every other boundary component of \(D\).

Let $A$ denote the annular region in $D$ bounded by $\Gamma_i$ and the
level set. As $f$ preserves the level sets of $u$, we must have $f(A)
\subset A$. Similarly, $f^{-1}(A) \subset A$. Hence, $f(A) = A$ and
$f|_A$ is a biholomorphism of $A$. It is standard that a $2$-connected
domain is biholomorphic to an annulus. An elementary proof of this fact can be
found in \cite[Chapter~V]{Goluzin}. Let $\Phi:A \longrightarrow A_r$
be a biholomorphism onto the annulus with inner radius $r$, where
$0<r<1$, and outer radius $1$. Since
both boundary components of $A$ are real analytic, \(\Phi\) extends
biholomorphically across them. We now have the biholomorphism $\Phi
\circ f \circ \Phi^{-1}$ of $A_r$ that has a fixed point on the
boundary. This is impossible unless $f$ is the identity.
\end{proof}

\begin{remark}
  We have elected to give two proofs of the above proposition. The first
  proof requires the three results stated at the beginning of this
  section but the proofs of these results are not very hard
  and do not require deep machinery. Thus the first proof is in some
  sense more elementary. The second proof requires the machinery of
  harmonic measure which we will use later in the paper to obtain bounds
  on the number of proper holomorphic maps between two fixed finitely
  connected domains. So in a sense, the second proof is the more flexible one. 
\end{remark}

With boundary fixed points excluded, the rest of the proof is just
the argument principle: the number of fixed points of an automorphism is
given by the winding number of $f(\Gamma) - z$ about the origin, where
$\Gamma$ is the cycle obtained by summing up the finitely many Jordan
curves that comprise $\bdy D$ (taking into account the natural
orientation). Following He--Schramm \cite{Schramm}, we make the following

\begin{definition}
  Let \(\Gamma\) be an oriented Jordan curve and let \(h:\Gamma\to\C\) be
  continuous and fixed-point-free. The fixed-point index of $h$ is
  \[
  \Ind_\Gamma(h) := \wind_0(h(\Gamma(z))-\Gamma(z)).
  \]
  For a finite disjoint union of oriented curves, the index is defined
  as the sum of the indices over the curves.
\end{definition}

We will now count the number of fixed points that a map $f \in \Aut(D)$ can
possibly have by counting $\Ind_{\bdy D}(f)$, where $\bdy D$ is oriented
in the usual way. We first deal with those boundary curves that are fixed.
We need the following basic lemma.
\begin{lemma}\label{lem:inward-contraction}
Let \(\Gamma\) be a smooth real-analytic Jordan curve, and let \(J\) denote the
bounded component it determines. For every \(c\in J\), there is a continuous
map
\[
        \rho:\Gamma\times[0,1]\longrightarrow\ol J
\]
such that
\[
 \rho(z,0)=z,\qquad \rho(z,1)=c,
 \qquad \rho(z,t)\in J\quad(0<t\leq1).
\]
\end{lemma}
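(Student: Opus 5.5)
The approach is to pull back the radial contraction of the closed unit disc to $c$ by a Riemann map. By Result~\ref{R:lang}, applied to the simply connected unpunctured admissible domain $J$, there is a Riemann map $\phi:J\to\disk$ that extends to a biholomorphism from a neighbourhood of $\ol J$ onto a disc $\{|w|<R\}$ with $R>1$. In particular, $\phi$ restricts to a homeomorphism $\ol J\to\ol\disk$ carrying $\Gamma$ onto $\T$ and $J$ onto $\disk$.

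Composing with a disc automorphism, I will normalise $\phi$ so that $\phi(c)=0$. The Möbius map $w\mapsto (w-\phi(c))/(1-\ol{\phi(c)}w)$ preserves $\ol\disk$, $\disk$ and $\T$, so the extended homeomorphism properties are unaffected. I then define
\[
  \rho(z,t):=\phi^{-1}\bigl((1-t)\phi(z)\bigr),\qquad z\in\Gamma,\ t\in[0,1].
\]
This is continuous as a composition of continuous maps. It satisfies $\rho(z,0)=\phi^{-1}(\phi(z))=z$ and $\rho(z,1)=\phi^{-1}(0)=c$. For $0<t\le1$ we have $|(1-t)\phi(z)|=1-t<1$, so $(1-t)\phi(z)\in\disk$, and therefore $\rho(z,t)\in\phi^{-1}(\disk)=J$.

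The only point needing care is that $\phi^{-1}$ is defined and continuous on all of $\ol\disk$, i.e.\ the boundary extension of the Riemann map. This is exactly what Result~\ref{R:lang} provides, since $\Gamma$ is real-analytic, so no appeal to Carathéodory's theorem is needed. Everything else is routine.
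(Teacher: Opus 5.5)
Your proof is correct and is essentially the paper's own argument. The paper takes a Riemann map $\phi:\D\to J$ with $\phi(0)=c$, extends it biholomorphically across $\T$ using Corollary~\ref{cor:biholo-extension}, and sets $\rho(\phi(\xi),t)=\phi((1-t)\xi)$; this is your $\phi^{-1}((1-t)\phi(z))$ with the direction of the Riemann map reversed. Citing Result~\ref{R:lang} directly for the boundary extension, as you do, is equally valid.
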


\begin{proof}
Let \(\phi:\D\to J\) be a Riemann map with \(\phi(0)=c\). By
Corollary~\ref{cor:biholo-extension}, \(\phi\) extends biholomorphically
across \(\T\), and \(\phi|_{\T}:\T\to\Gamma\) is a homeomorphism. For
\(z=\phi(\xi)\), \(\xi\in\T\), define
\[
        \rho(z,t)=\phi((1-t)\xi).
\]
This has all the stated properties.
\end{proof}

\begin{lemma}
Let \(\Gamma\) be a smooth real-analytic Jordan curve oriented
counterclockwise around the bounded component it determines, and let
\(h:\Gamma\to\Gamma\) be a fixed-point-free orientation-preserving
homeomorphism. Then
\[
        \Ind_\Gamma(h)=1.
\]
Thus, with the positive boundary orientation of a finitely connected domain, a
fixed outer component contributes \(+1\), whereas a fixed inner component
contributes \(-1\).
\end{lemma}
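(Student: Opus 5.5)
The plan is to deform the loop \(h(\Gamma(s))-\Gamma(s)\) through fixed-point-free loops into a loop whose winding number about \(0\) is obviously \(1\). The natural target is the constant map \(\Gamma(s)\mapsto c\) for a point \(c\) in the bounded component \(J\), because the loop \(c-\Gamma(s)\) winds once around \(0\) when \(\Gamma\) is positively oriented around \(J\). The tool is Lemma~\ref{lem:inward-contraction}, applied to the image point rather than the source point. Set
\[
 H(s,t):=\rho\bigl(h(\Gamma(s)),t\bigr)-\Gamma(s),\qquad t\in[0,1].
\]
Then \(H(\cdot,0)\) is the loop defining \(\Ind_\Gamma(h)\), and \(H(\cdot,1)=c-\Gamma(s)\).

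The key step is to check that \(H\) never vanishes, so that homotopy invariance of the winding number applies.
\begin{enumerate}
\item For \(t=0\), the value is \(h(\Gamma(s))-\Gamma(s)\neq0\) because \(h\) is fixed-point-free.
\item For \(0<t\le1\), the point \(\rho(h(\Gamma(s)),t)\) lies in the open set \(J\), while \(\Gamma(s)\in\Gamma\) and \(\Gamma\cap J=\emptyset\). So \(H(s,t)\neq0\).
\end{enumerate}
Continuity of \(H\) follows from the continuity of \(\rho\), \(h\) and \(\Gamma\). Hence \(\Ind_\Gamma(h)=\wind_0(c-\Gamma)\), which is the winding number of \(\Gamma\) about \(c\). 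This equals \(1\) since \(c\in J\) and \(\Gamma\) is oriented counterclockwise around \(J\); it follows from the Jordan curve theorem, or, more elementarily here, by pulling back through the Riemann map \(\phi\) of Lemma~\ref{lem:inward-contraction} and using the argument principle for \(\phi-c\), which has a single simple zero in \(\D\).

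This argument never uses that \(h\) is an orientation-preserving homeomorphism; it only uses that \(h\) maps \(\Gamma\) into \(\Gamma\) without fixed points. That seems correct: any fixed-point-free self-map of a circle has index \(1\) in this sense. So no separate treatment of degree is needed.

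For the final sentence of the statement, take the positive boundary orientation of a finitely connected domain \(D\). The outer component \(\Gamma_0\) is then counterclockwise around its bounded component, so a fixed outer component contributes \(+1\). An inner component \(\Gamma_j\) carries the opposite orientation. Reversing the parametrisation of a loop negates its winding number, and the statement applied to \(\Gamma_j\) with its counterclockwise orientation gives \(1\), so a fixed inner component contributes \(-1\). The only point needing care, and the one I expect to be the main obstacle, is justifying that \(\wind_c(\Gamma)=1\) for the counterclockwise orientation. I would handle this by defining ``counterclockwise around \(J\)'' via the Riemann map as above, so that it reduces to the unit circle.
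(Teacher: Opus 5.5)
Your proof is correct. It uses the same tool as the paper, the inward contraction $\rho$ of Lemma~\ref{lem:inward-contraction}, but applies it to the other factor.

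The paper contracts the source point. It uses the homotopy $h(\Gamma(z))-\rho(\Gamma(z),t)$, which ends at $h(\Gamma(z))-c$, and so it needs $\wind_c(h\circ\Gamma)=1$. That is exactly where the hypothesis that $h$ is an orientation-preserving homeomorphism (hence of degree one) is used.

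You contract the image point instead, ending at $c-\Gamma(s)$. You then only need $\wind_c(\Gamma)=1$, and the hypothesis on $h$ becomes superfluous. Your route therefore proves the stronger statement that every fixed-point-free continuous map $\Gamma\to\Gamma$ has index $1$, as you observed. Comparing your homotopy with the paper's shows further that any such map automatically has degree one, a Lefschetz-type fact.

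Both arguments rest on $\wind_c(\Gamma)=1$ for $c\in J$, which the paper simply takes as the meaning of positive orientation. Your justification of this is sound: you pull back through the Riemann map $\phi$, which extends across $\T$ by Corollary~\ref{cor:biholo-extension}, and count the single simple zero of $\phi-c$ in $\D$ by the argument principle. This is in the same spirit as the paper's proof of the Umlaufsatz (Lemma~\ref{lem:tangent-turning}). Your treatment of inner components, reversing the parametrisation to negate the winding number, matches the paper's one-line remark.
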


\begin{proof}
Choose \(c\) in the bounded component and let \(\rho\) be as in the
previous lemma. The homotopy of closed curves
\[
        h(\Gamma(z))-\rho(\Gamma(z),t)
\]
clearly misses $0$, and hence
\[
 \Ind_\Gamma(h) = \wind_0(h(\Gamma(z))-\Gamma(z))=\wind_0(h(\Gamma(z))-c)=1.
\]
The sign change for the inner components is due to their clockwise
orientation.
\end{proof}

For the next computation, let \(J_i\) be the bounded Jordan domain enclosed
by \(\Gamma_i\). Then \(D\subset J_0\), the domains
\(J_1,\ldots,J_{k-1}\) are pairwise disjoint, and
\(\ol J_i\subset J_0\) for \(i\geq1\).

\begin{lemma}\label{lem:component-table}
Let \(D\) be an unpunctured admissible domain of connectivity \(k\geq2\),
and let
\(h:\Gamma_i\to\Gamma_j\) be a homeomorphism which preserves the boundary
orientations induced by \(D\). Assume that \(h\) is fixed-point-free when
\(i=j\). Then
\[
\Ind_{\Gamma_i}(h)=
\begin{cases}
 +1,&i=j=0,\\
 -1,&i=j\neq0,\\
  0,&i\neq j\text{ and }i,j\neq0,\\
 +1,&i\neq j\text{ and }\{i,j\}\text{ contains }0.
\end{cases}
\]
\end{lemma}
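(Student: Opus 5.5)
The plan is to handle the four cases separately. In each case we compute $\wind_0\bigl(h(\Gamma_i(z))-\Gamma_i(z)\bigr)$ by deforming either the point $h(\Gamma_i(z))$ or the point $\Gamma_i(z)$ to a constant, using Lemma~\ref{lem:inward-contraction}. The curve being traversed is $\Gamma_i$, with the orientation induced by $D$: counterclockwise for $i=0$ and clockwise for $i\ge1$. The two cases $i=j$ are exactly the previous lemma. For $i=j=0$ we get $+1$ directly. For $i=j\neq0$ the induced orientation is clockwise, so the same computation gives $-1$.

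For $i\neq j$ the curves $\Gamma_i$ and $\Gamma_j$ are disjoint, so the difference $h(z)-z$ is automatically nonvanishing. There are three subcases.

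\emph{Case $i,j\neq0$.} We contract $\Gamma_j$ inside $\ol J_j$ to a point $c\in J_j$ via $\rho$ from Lemma~\ref{lem:inward-contraction}, using the homotopy $\rho(h(z),t)-z$. Since $\ol J_j\cap\Gamma_i=\emptyset$ (the closures $\ol J_1,\dots,\ol J_{k-1}$ are pairwise disjoint), this difference never vanishes. Hence the index equals $\wind_0(c-\Gamma_i(z))$, which is the winding number of $\Gamma_i$ about $c$ up to orientation. Because $c\notin J_i$, this is $0$.

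\emph{Case $i=0$, $j\neq0$.} We again contract the image curve $\Gamma_j$ to $c\in J_j$. Now $\ol J_j\subset J_0$ is disjoint from $\Gamma_0$, so the index equals $\wind_0(c-\Gamma_0(z))$. The curve $\Gamma_0$ is counterclockwise and $c\in J_0$, so this winding number is $1$.

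\emph{Case $i\neq0$, $j=0$.} Here we contract the source curve $\Gamma_i$ instead, to a point $c\in J_i$, via $h(\Gamma_i(z))-\rho(\Gamma_i(z),t)$. This is legitimate because $\ol J_i\subset J_0$ misses $\Gamma_0$. The index becomes $\wind_0(h(\Gamma_i(z))-c)$, i.e.\ the winding number about $c$ of the loop $h\circ\Gamma_i$ traversing $\Gamma_0$.

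The main point to check is the orientation bookkeeping in this last case. By hypothesis $h$ preserves the boundary orientations induced by $D$. So as $\Gamma_i$ is traced clockwise (its induced orientation), $h\circ\Gamma_i$ traces $\Gamma_0$ once in its induced, counterclockwise orientation, since $h$ is a homeomorphism and hence of degree one. Its winding about $c\in J_0$ is therefore $+1$. The same bookkeeping confirms the case $i=0$, $j\neq0$: there the source $\Gamma_0$ is counterclockwise and only the winding of $\Gamma_0$ around $c$ matters. Throughout, we use that winding numbers are invariant under homotopies of closed curves avoiding $0$, and that a Jordan curve winds $\pm1$ around points of its interior and $0$ around exterior points.
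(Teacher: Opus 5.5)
Your proof is correct and follows essentially the same route as the paper. The cases $i=j$ come from the previous lemma, and the cases $i\neq j$ are handled by contracting one curve to a point via Lemma~\ref{lem:inward-contraction}; the only cosmetic difference is that for $i,j\neq0$ the paper contracts the source curve $\Gamma_i$ to $c_i\in J_i$ (using that $h\circ\Gamma_i$ does not wind around $c_i$), whereas you contract the image $\Gamma_j$, which is equally valid.
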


\begin{proof}
The case $i = j$ follows from the previous lemma. Suppose that
$h:\Gamma_i \to \Gamma_j$, where $i \neq j$ and $i,j \neq 0$.
Contract \(\Gamma_i\) inside \(J_i\) to a point \(c_i\in J_i\) using
Lemma~\ref{lem:inward-contraction}. The final
curve is \(h(\Gamma_i(z)) - c_i\), and \(c_i\notin J_j\); it follows
that $\Ind_{\Gamma_i}(h) = 0$. If \(i = 0\) and \(j \neq 0\), we shrink
$h(\Gamma_i)$ to a point $c_j \in J_j \subset J_0$ to yield the curve
\(c_j-\Gamma_0(z)\). Thus $\Ind_{\Gamma_0}(h) = +1$ in this case.
Finally, suppose that \(i\neq0\) and \(j=0\). The curve
$h(\Gamma_i(z))$ has winding number $1$ with respect to any point in $J_0$.
The curve $\Gamma_i(z)$ can be shrunk to a point $c_i$ in $J_i \subset J_0$. Thus
$\Ind_{\Gamma_i}(h) = +1$ in this case as well.
\end{proof}

\begin{proposition}
\label{prop:cycle-index}
Let \(D\) be an unpunctured admissible domain of connectivity \(k\geq2\),
and let $f$ be a nontrivial automorphism. Denote by
\(b(f)\) the number of boundary components fixed setwise by \(f\), and
by \(N_D(f)\) the number of fixed points in \(D\), counted with
multiplicity. Then
\[
        N_D(f)=2-b(f).
\]
In particular, \(b(f)\leq2\) and \(N_D(f)\leq2\).
\end{proposition}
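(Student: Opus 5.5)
Since \(f\) is nontrivial, Proposition~\ref{prop:no-boundary-fixed-point} shows that the extension of \(f\) has no fixed points on \(\partial D\). Hence \(z\mapsto f(z)-z\) is holomorphic on a neighbourhood of \(\ol D\), does not vanish on \(\partial D\), and is not identically zero. The argument principle, applied to \(f(z)-z\) on \(D\) with \(\partial D=\Gamma_0+\Gamma_1+\cdots+\Gamma_{k-1}\) positively oriented, then gives
\[
N_D(f)=\wind_0\bigl(f(\partial D)-\partial D\bigr)=\sum_{i=0}^{k-1}\Ind_{\Gamma_i}(f).
\]
This is the index of Definition above, summed over components. So the task reduces to computing each summand with Lemma~\ref{lem:component-table}.

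By Theorem~\ref{thm:proper-extension}, applied to \(f\) and \(f^{-1}\), \(f\) permutes the boundary components. Each restriction \(f|_{\Gamma_i}:\Gamma_i\to\Gamma_{\sigma(i)}\) is a homeomorphism preserving the boundary orientations induced by \(D\), and it is fixed-point-free when \(\sigma(i)=i\). Lemma~\ref{lem:component-table} therefore applies to every component. I split into two cases according to whether \(\sigma(0)=0\).

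\emph{Case \(\sigma(0)=0\).} Then \(\Gamma_0\) contributes \(+1\). No inner component maps to \(\Gamma_0\), and inner components are permuted among themselves. Each fixed inner component contributes \(-1\), and each non-fixed inner component contributes \(0\) (the case \(i\neq j\), \(i,j\neq0\)). The sum is \(1-(b(f)-1)=2-b(f)\).

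\emph{Case \(\sigma(0)=j\neq0\).} Then \(\Gamma_0\) contributes \(+1\). Since \(\sigma\) is a bijection, exactly one inner component \(\Gamma_i\) satisfies \(\sigma(i)=0\), and \(i\ne0\), so it contributes \(+1\). The remaining inner components contribute \(-1\) if fixed and \(0\) otherwise, and \(\Gamma_0\) is not fixed here. The sum is \(2-b(f)\).

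In both cases \(N_D(f)=2-b(f)\). Since \(N_D(f)\ge0\), this gives \(b(f)\le2\) and \(N_D(f)\le2\).

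The only delicate points are bookkeeping. First, the orientation convention of the index must match the one in the argument principle: the argument principle counts zeros of \(f(z)-z\) as \(\wind_0\) of its image along the positively oriented boundary cycle, and this is exactly the sum of the component indices with the induced orientations used in Lemma~\ref{lem:component-table}. Second, one should check that \(f|_{\Gamma_i}\) is a genuine homeomorphism rather than merely a covering. This holds because \(f^{-1}\) also extends and the two extensions are mutually inverse on \(\partial D\) by continuity. Beyond this I expect no real obstacle, since the heavy lifting was done in Proposition~\ref{prop:no-boundary-fixed-point} and Lemma~\ref{lem:component-table}.
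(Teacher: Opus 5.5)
Your proof is correct and follows the paper's argument essentially verbatim. You exclude boundary fixed points via Proposition~\ref{prop:no-boundary-fixed-point}, identify $N_D(f)$ with $\Ind_{\partial D}(f)$ by the argument principle, and sum the contributions from Lemma~\ref{lem:component-table} in two cases according to whether $\Gamma_0$ is fixed; in the second case the outer circle contributes $+2$, once as source and once as target, exactly as in the paper. Your extra remarks on why $f|_{\Gamma_i}$ is an orientation-preserving homeomorphism (which also follows from Corollary~\ref{cor:biholo-extension}) and on matching the orientation conventions supply details the paper leaves implicit.
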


\begin{proof}
 Proposition
\ref{prop:no-boundary-fixed-point} shows that \(f\) has no fixed point on
\(\partial D\). The argument principle gives
\[
  N_D(f)=\Ind_{\partial D}(f).
\]

If the outer circle is fixed, then the previous lemma immediately gives
\[
  \Ind_{\partial D}(f) = 1 - (b(f) - 1) = 2 - b(f).
\]
Otherwise, the outer circle contributes $+2$ to the fixed-point index
(once as a source and once as a target) and each fixed inner component
contributes $-1$. The conclusion follows in this case as well.

\end{proof}

\begin{theorem}\label{thm:automorphism-bound}
Let \(D\) be an admissible domain of connectivity \(k\geq3\). Then
\[
        |\Aut(D)|\leq k(k-1)(k-2).
\]
Moreover, every nonidentity automorphism has at most two fixed points in
\(D\), counted with multiplicity.
\end{theorem}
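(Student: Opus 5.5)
The plan is to reduce to the unpunctured admissible case and then show that an automorphism is determined by the images of three boundary components. The reduction comes first. Suppose every complementary component of $D$ is a point, so that $D$ is the sphere or the plane minus $k$ points. Then every automorphism extends to a Möbius transformation permuting the $k$ punctures (or permuting $k$ points with $\infty$). A Möbius map is determined by the images of three points, so $|\Aut(D)|\le k(k-1)(k-2)$ directly. Otherwise, the earlier results let us represent $D$ as an admissible domain whose outer boundary is $\T$, with at least one nondegenerate component. By the lemma on filling in punctures, each automorphism extends to an automorphism of the filled domain $\widetilde D$ and permutes the punctures. If $\widetilde D$ has connectivity at least $3$, we could work there, but the count must be in terms of $k$. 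The cleaner route is to run the fixed-point argument for the filled domain while keeping track of all $k$ "components". Punctures play the same role as interior points of $\widetilde D$ permuted by $f$.

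The key step is injectivity of the map $f\mapsto (f(C_0),f(C_1),f(C_2))$. Here $C_0,C_1,C_2$ are three fixed complementary components of $D$ (boundary curves or punctures). Suppose $f,g\in\Aut(D)$ agree on these three components, and set $h=g^{-1}\circ f$. Then $h$ fixes $C_0,C_1,C_2$ setwise. We must show $h=\id$.

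We work on the filled domain $\widetilde D$, which is unpunctured admissible. Its connectivity is $\widetilde k\ge 1$, and we may take it $\ge 2$ since otherwise the punctured-disc/Möbius argument applies. Each fixed nondegenerate boundary curve among the $C_i$ counts toward $b(h)$. Each fixed puncture is a fixed point of the extension of $h$ in $\widetilde D$, so it counts toward $N_{\widetilde D}(h)$. Suppose $h\ne\id$ and apply Proposition~\ref{prop:cycle-index} to $\widetilde D$. We get $N_{\widetilde D}(h)+b(h)=2$. But the three fixed components contribute at least $3$ to $N_{\widetilde D}(h)+b(h)$, a contradiction. Hence $h=\id$, and the number of automorphisms is at most the number of ordered triples of distinct components, namely $k(k-1)(k-2)$.

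The claim that a nonidentity automorphism has at most two fixed points follows from the same proposition. We have $N_D(f)\le N_{\widetilde D}(f)\le 2$, and multiplicities agree at interior points.

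The main obstacle is the edge cases where $\widetilde D$ has connectivity $1$, since Proposition~\ref{prop:cycle-index} requires $k\ge2$. In that case $\widetilde D$ is simply connected and we may take $\widetilde D=\D$. Then $D$ is the disc minus $k-1\ge2$ points, and an automorphism is a disc automorphism preserving the puncture set. Such a map fixing two punctures is the identity, because a disc automorphism with two fixed points is trivial by Schwarz's lemma. So $|\Aut(D)|\le (k-1)(k-2)$ in that case. Finally, the multiplicity statement there is Cartan's uniqueness theorem (Result~\ref{R:cartan}). A fixed point of multiplicity at least $2$ means $h'(a)=1$, which forces $h=\id$.
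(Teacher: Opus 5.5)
Your argument is correct and is essentially the paper's: Proposition~\ref{prop:cycle-index} forces a nonidentity automorphism to fix at most two complementary components, so an automorphism is determined by the images of three of them, and the fixed-point bound comes from the same proposition. The only difference is how punctures are handled. The paper fills them in and uses $\widetilde k(\widetilde k-1)(\widetilde k-2)\le k(k-1)(k-2)$, treating the disc and annulus cases by hand, so your worry that ``the count must be in terms of $k$'' is unnecessary. You instead count fixed punctures as interior fixed points of the extended map, which absorbs the annulus case and leaves only the disc case to check separately.
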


\begin{proof}
 We can remove the punctures and assume that $D$ is unpunctured. In general, this
 reduces $k$. The cases when $D$ becomes a disc or a $2$-connected
 domain (in which case it is biholomorphic to an annulus) after filling
 the punctures are easy to handle. So we may assume $D$ has at least $3$
 boundary components even after filling in the punctures. Any
 nonidentity automorphism of $D$ can fix at most two boundary
 components. This immediately implies that the images of three distinct
 boundary components under an automorphism determine that automorphism
 uniquely. This yields the bound \(k(k-1)(k-2)\) for the size of the
 automorphism group. The fixed-point assertion is precisely the second
 conclusion of Proposition~\ref{prop:cycle-index}.
\end{proof}

\begin{remark}\label{R:old} In our paper \cite{bharathi}, we had shown
  the more general fact that any nontrivial automorphism of \textit{any}
  planar domain can have at most $2$ fixed points. We proved this by
  reducing to the case of finitely connected domains by showing that
  balls under the Kobayashi metric of a hyperbolic domain are finitely
  connected and then applying Koebe's circle mapping theorem. The above
  theorem gives us to bypass Koebe's theorem. We had also shown that the
  isotropy group of any bounded multiply connected domain is finite.
  This result also follows without the need to appeal to Koebe's
  theorem.
\end{remark}

\subsection{The proof of Heins's theorem (Result~\ref{R:heins})}

We will give the proof
assuming there are no punctures. The general case follows by filling in
the punctures and arguing as below.

Put \(H=\Aut(D)\) and \(N=|H|\). By the preceding theorem, \(H\) is
finite. We may assume that \(N>1\). Let \(\widehat D\) be the quotient of
\(\overline D\) obtained by
collapsing each \(\Gamma_j\) to a point \(p_j\). It is elementary that
\(\widehat D\) is homeomorphic to the sphere \(S^2\), but we do not need
this in the sequel.

Each \(h\in H\) extends to \(\overline D\), permutes the boundary
components, and therefore induces a homeomorphism \(\widehat h\) of
\(\widehat D\). The resulting action of \(H\) on \(\widehat D\) is
faithful. Moreover,
\[
 \operatorname{Fix}(\widehat h)
 =
 \operatorname{Fix}_D(h)\sqcup
 \{p_j:h(\Gamma_j)=\Gamma_j\}.
 \tag{1}
\]
Proposition~\ref{prop:cycle-index} and Cartan's uniqueness
theorem (Result~\ref{R:cartan}) therefore give
\[
    \#\operatorname{Fix}(\widehat h)=2,
    \qquad h\in H\setminus\{\mathrm{id}\}.
    \tag{2}
\]

Let
\[
    S:=\{x\in\widehat D:H_x\neq\{\mathrm{id}\}\},
\]
where $H_x$ is the stabiliser of $x$.
In other words, \(S\) is the set of points fixed by some nonidentity element of
\(H\). It is finite, since \(H\) is finite and, by~(2), every
nonidentity element fixes exactly two points. Let $S_1,\ldots,S_r$
be distinct \(H\)-orbits in \(S\), and let \(q_i\geq2\) be the order of the
stabiliser of a point in \(S_i\). By the orbit--stabiliser theorem,
$|S_i|=\frac{N}{q_i}$. We now perform the analogue of the classical pole count; compare \cite[\S6.12]{Artin}. Consider
\[
 \mathcal P
 :=
 \{(h,x)\in(H\setminus\{\mathrm{id}\})\times S:
                   \widehat h(x)=x\}.
\]
By~(2), every \(h\neq\mathrm{id}\) occurs in exactly two pairs, so
\[
    |\mathcal P|=2(N-1).
\]
On the other hand, every point of \(S_i\) is fixed by exactly
\(q_i-1\) nonidentity elements. Hence
\[
\begin{aligned}
    2(N-1)
      =\sum_{i=1}^r |S_i|(q_i-1) &=\sum_{i=1}^r\frac{N}{q_i}(q_i-1)\\
      &=N\sum_{i=1}^r\left(1-\frac1{q_i}\right).
\end{aligned}
\]
Consequently,
\[
    2-\frac2N
      =\sum_{i=1}^r\left(1-\frac1{q_i}\right).
    \tag{3}
\]

We now solve~(3). Since \(q_i\geq2\), every summand on the right is at
least \(1/2\). Since
\[
    1\leq2-\frac2N<2,
\]
it follows that \(r=2\) or \(r=3\).

If \(r=2\), then~(3) gives
\[
    \frac1{q_1}+\frac1{q_2}=\frac2N.
\]
Since \(q_i\leq N\), each term on the left is at least \(1/N\).
Equality therefore forces
\[
    q_1=q_2=N.
\]

Suppose \(r=3\), and arrange \(q_1\leq q_2\leq q_3\). Equation~(3)
becomes
\[
    \frac1{q_1}+\frac1{q_2}+\frac1{q_3}
       =1+\frac2N>1.
    \tag{4}
\]
If \(q_1\geq3\), the left-hand side is at most \(1\), so \(q_1=2\).
If \(q_2\geq4\), it is at most
\[
    \frac12+\frac14+\frac14=1,
\]
so \(q_2=2\) or \(3\).

If \(q_2=2\), equation~(4) gives
\[
    \frac1{q_3}=\frac2N.
\]
Thus
\[
    (q_1,q_2,q_3)=(2,2,\ell),
    \qquad N=2\ell.
\]

If \(q_2=3\), then~(4) implies \(q_3<6\). Hence
\(q_3=3,4,\) or \(5\), and substitution into~(4) gives respectively
\[
    N=12,\qquad N=24,\qquad N=60.
\]

We have therefore obtained the following exhaustive list:
\[
\begin{array}{c|c|c}
 (q_1,\ldots,q_r)
   & N
   & (|S_1|,\ldots,|S_r|)\\ \hline
 (N,N)
   & N
   & (1,1)\\
 (2,2,\ell)
   & 2\ell
   & (\ell,\ell,2)\\
 (2,3,3)
   & 12
   & (6,4,4)\\
 (2,3,4)
   & 24
   & (12,8,6)\\
 (2,3,5)
   & 60
   & (30,20,12).
\end{array}
\tag{5}
\]
Every orbit in
\(\widehat D\setminus S\) is free and therefore has size \(N\). The set
\[
    P:=\{p_0,\ldots,p_{k-1}\}
\]
of collapsed boundary components is \(H\)-invariant. Hence \(P\) is a
union of some of the nonfree orbits in~(5) and some free orbits.

In the first case of~(5), \(k\geq3\) forces \(P\) to contain a free
orbit, so \(N\leq k\). In the second case, \(P\) must contain either a
free orbit or an orbit of size \(\ell\). Therefore
\[
    N=2\ell\leq2k.
\]

It remains to consider the three exceptional cases. If \(N=12\) and
\(N>2k\), then \(k<6\); the nonfree orbit sizes \(6,4,4\) force
\(k=4\). If \(N=24\) and \(N>2k\), then \(k<12\); the orbit sizes
\(12,8,6\) force \(k=6\) or \(8\). Finally, if \(N=60\) and \(N>2k\),
then \(k<30\); the orbit sizes \(30,20,12\) force \(k=12\) or \(20\).
Thus
\[
 |\Aut(D)|\leq
 \begin{cases}
  12,&k=4,\\
  24,&k=6\ \text{or}\ 8,\\
  60,&k=12\ \text{or}\ 20,\\
  2k,&\text{otherwise}.
 \end{cases}
\]

The sharpness of the bound is established by the examples in
\cite{Heins2}.

\qed

\section{The Riemann--Hurwitz theorem for planar domains}

Let $G$ and $D$ be unpunctured admissible domains and let \(f:G\to D\) be a
proper holomorphic map of degree $d$. We have established in
Theorem~\ref{thm:proper-extension} that $f$ extends holomorphically to a
neighbourhood of $\overline{G}$ and that there are no critical points
on $\bdy G$. Thus the critical set of $f$ is finite and the number of
critical points counted with multiplicity is given by
\[
  r := \sum_{a\in G}\operatorname{ord}_a f'.
\]
We have also shown that the restriction of $f$ to each boundary component
of $G$ is a covering map.

\begin{lemma}\label{lem:boundary-degree} Let $f:G \to D$ be a proper
holomorphic map between unpunctured admissible domains.
Then, for every \(j\),
\[
        \sum_{\sigma(i)=j}d_i=d,
\]
where $d$ is the degree of $f$.
\end{lemma}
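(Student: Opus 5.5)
Here \(d_i\) denotes the covering degree of \(f|_{\Sigma_i}\colon\Sigma_i\to\Gamma_{\sigma(i)}\) from Theorem~\ref{thm:proper-extension}. The plan is to count preimages of a point \(w\in D\) lying very close to \(\Gamma_j\), using the argument principle. The count over all of \(G\) equals \(d\), and we want to show it also equals the total covering degree of the boundary curves sitting over \(\Gamma_j\).

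First fix \(j\) and a one-sided analytic collar \(V_j\) of \(\Gamma_j\) in \(D\). By properness (as in the proof of Lemma~\ref{L:bdycorres}), \(f^{-1}(D\setminus V_j)\) is compact, so for \(w\in V_j\) near \(\Gamma_j\) every preimage of \(w\) lies in a thin collar of \(\bigcup_{\sigma(i)=j}\Sigma_i\). Preimages cannot lie near \(\Sigma_i\) with \(\sigma(i)\ne j\), since there \(f\) takes values near \(\Gamma_{\sigma(i)}\).

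Next, work near a single \(\Sigma_i\) with \(\sigma(i)=j\), using the extension of \(f\) across \(\Sigma_i\). By Theorem~\ref{thm:proper-extension}, \(f\) has no critical points on \(\Sigma_i\) and \(f|_{\Sigma_i}\) is an orientation-preserving \(d_i\)-fold covering of \(\Gamma_j\). Hence \(f\) is a local biholomorphism on a neighbourhood of \(\Sigma_i\). A compactness argument then shows that \(f\) restricted to a thin two-sided collar \(\tilde U_i\) of \(\Sigma_i\) is a \(d_i\)-fold covering onto a two-sided collar of \(\Gamma_j\), with the inner half \(\tilde U_i\cap G\) mapping into \(D\). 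The inner half maps into \(D\) because, by the chosen orientations, \(f\) carries the inside of \(\Sigma_i\) relative to \(G\) to the \(D\)-side of \(\Gamma_j\).

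Then \(w\) close to \(\Gamma_j\) has exactly \(d_i\) preimages in \(\tilde U_i\cap G\), all simple. An alternative, cleaner route uses the argument principle: take \(\gamma_i\) to be a curve in \(G\) parallel to \(\Sigma_i\), and compare \(\wind_w(f\circ\gamma_i)\) with \(\wind_w(f\circ\Sigma_i)=\pm d_i\) via Lemma~\ref{lem:component-table}-type winding computations. Summing over \(i\) with \(\sigma(i)=j\) gives \(d=\#f^{-1}(w)=\sum_{\sigma(i)=j}d_i\).

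The main obstacle is justifying the local covering picture uniformly along all of \(\Sigma_i\), that is, choosing a single collar width that works. A clean way to handle this is to reuse the map \(H=\beta^{-1}\circ f\circ\alpha\) from the proof of Theorem~\ref{thm:proper-extension}. Near \(|\zeta|=1\), \(H\) is a holomorphic map from a neighbourhood of \(\T\) into a neighbourhood of \(\T\) with \(|H|=1\) on \(\T\). Its restriction to \(\T\) is \(z\mapsto\) a degree-\(d_i\) covering. Writing \(H(\zeta)=\zeta^{d_i}e^{g(\zeta)}\) with \(g\) holomorphic and \(\operatorname{Re}g=0\) on \(\T\), one sees that for \(|w|\) slightly less than \(1\) the equation \(H(\zeta)=w\) has exactly \(d_i\) solutions in \(\{r'<|\zeta|<1\}\). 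This follows from the argument principle on that thin annulus: the winding on \(|\zeta|=1\) is \(d_i\), and the winding on \(|\zeta|=r'\) is \(0\), because there \(|H|<|w|\) fails only if \(w\) is chosen closer to \(\T\) than \(\max_{|\zeta|=r'}|H|\).
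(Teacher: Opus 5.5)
Your argument is correct in substance, but it follows a different route from the paper. The paper works at a single boundary point $y\in\Gamma_j$. Because no point of $\partial G$ is critical, $f$ restricts to a diffeomorphism on small relative neighbourhoods $U_i\subset\ol G$ of the $\sum_{\sigma(i)=j}d_i$ points of $f^{-1}(y)\cap\ol G$. Removing the compact set $f(\ol G\setminus\bigcup_i U_i)$ from $\bigcap_i f(U_i)$ then leaves a neighbourhood of $y$ whose points in $D$ each have exactly that many simple preimages. You instead count preimages of a point near $\Gamma_j$ by the argument principle for $H=\beta^{-1}\circ f\circ\alpha$ on a thin annulus $\{r'<|\zeta|<1\}$: the winding number of $H-w$ is $\wind_0(H|_{\T})=d_i$ on the outer circle and $0$ on the inner one. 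This uses from Theorem~\ref{thm:proper-extension} only the extension of $H$ across $\T$ with $|H|=1$ there, plus the identification of $d_i$ with $\wind_0(H|_{\T})$. You never need local injectivity at boundary points, multiplicities are counted automatically, and the method matches the rest of the paper. The paper's argument is purely topological once the extension theorem is in hand. Your third paragraph (the two-sided covering collar and the parallel curves $\gamma_i$) is unnecessary, and so is the factorisation $H=\zeta^{d_i}e^{g}$; your final paragraph alone does the job.

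There are two slips to fix.

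First, $f^{-1}(D\setminus V_j)$ is not compact, because $D\setminus V_j$ still reaches the other boundary components. The correct localisation runs as follows.
\begin{enumerate}
\item $G$ minus thin collars of all the $\Sigma_i$ is compact, so its image stays a positive distance away from $\Gamma_j$.
\item The collars of $\Sigma_i$ with $\sigma(i)\neq j$ map into collars of $\Gamma_{\sigma(i)}$, which are disjoint from $V_j$. This is your next sentence.
\item You should use one fixed collar $\beta$ of $\Gamma_j$ for all $i$ with $\sigma(i)=j$. Then take $w$ close enough to $\Gamma_j$ to serve all these $i$ simultaneously.
\end{enumerate}

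Second, your last sentence has the inequality backwards. In the $\eta$-coordinate you must choose $w$ with $\max_{|\zeta|=r'}|H|<|w|<1$. This is possible because $|H|<1$ on $|\zeta|=r'$. Then $|H|<|w|$ on $|\zeta|=r'$, so $H-w$ has winding number $0$ there.
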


\begin{proof}
 For \(y\in\Gamma_j\), the set
$E=f^{-1}(y)\cap\ol G \subset \partial G$ comprises only regular points
by Theorem~\ref{thm:proper-extension}. Thus the
cardinality of this set is precisely the sum of the degrees $d_i$.
Let $x_1,\dots,x_\ell \in E$ and let $U_i$ be pairwise disjoint open sets
around $x_i$ in $\ol{G}$ on which $f$ is a diffeomorphism. Let $V_i :=
f(U_i)$. Set
\[
  V := \left(V_1 \cap \dots \cap V_\ell\right) \setminus f\left(\overline{G} \setminus \bigcup_i U_i\right).
\]
Then every point of $V\cap D$ is a regular value of $f$ and has exactly
$\ell$ preimages. This means $\ell = d$ and we are done.
\end{proof}

We now require Hopf's Umlaufsatz, which admits a very short proof in our
setting.

\begin{lemma}\label{lem:tangent-turning}
Let \(\Gamma\) be a smooth real-analytic Jordan curve that is oriented
positively with respect to the bounded domain it determines. Then
$\wind_0(\Gamma') = 1$. If $\Gamma$ were given the opposite orientation, then
$\wind_0(\Gamma') = -1$.
\end{lemma}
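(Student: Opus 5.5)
The plan is to use the Riemann mapping theorem together with Corollary~\ref{cor:biholo-extension}, and thereby reduce to the unit circle, where the tangent direction can be computed by hand. Let \(J\) be the bounded Jordan domain determined by \(\Gamma\), and let \(\phi:\D\to J\) be a Riemann map. By Corollary~\ref{cor:biholo-extension}, \(\phi\) extends biholomorphically to a neighbourhood of \(\ol\D\). The extension \(\phi|_{\T}\) is then an orientation-preserving real-analytic diffeomorphism onto \(\Gamma\), by Theorem~\ref{thm:proper-extension}. In particular \(\phi'\neq0\) on a neighbourhood of \(\ol\D\).

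First I reparametrise, since the winding number of the tangent is invariant under orientation-preserving regular reparametrisation. Let \(\gamma(\theta)=\phi(e^{i\theta})\) for \(\theta\in[0,2\pi]\). The arc-length parametrisation \(\Gamma(s)\) equals \(\gamma(\theta(s))\), where \(\theta'(s)>0\). Hence \(\Gamma'(s)=\theta'(s)\gamma'(\theta(s))\). Because \(\theta'(s)\) is a positive real factor, it does not change the argument. Therefore \(\wind_0(\Gamma')=\wind_0(\gamma')\).

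Next I compute \(\wind_0(\gamma')\) directly. We have \(\gamma'(\theta)=i e^{i\theta}\phi'(e^{i\theta})\). The winding number of a product is the sum of the winding numbers, which gives
\[
\wind_0(\gamma')=\wind_0(i)+\wind_0(e^{i\theta})+\wind_0\bigl(\phi'(e^{i\theta})\bigr)=0+1+\wind_0\bigl(\phi'(e^{i\theta})\bigr).
\]
The function \(\phi'\) is holomorphic and zero-free on a neighbourhood of \(\ol\D\). By the argument principle, the last term counts the zeros of \(\phi'\) in \(\D\), so it is \(0\). An alternative is to contract via \(\phi'(te^{i\theta})\) for \(t\in[0,1]\), which never vanishes. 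Either way, \(\wind_0(\Gamma')=1\).

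For the opposite orientation, reversing the parametrisation replaces \(\Gamma'(s)\) by \(-\Gamma'(L-s)\), where \(L\) is the length of \(\Gamma\). The constant factor \(-1\) contributes no winding, while traversing the curve backwards negates the winding number. This gives \(-1\).

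The only delicate point is the reparametrisation step. One must check that \(\phi|_{\T}\) is orientation preserving, so that \(\theta'>0\), and that it is regular, so that \(\phi'\neq0\) on \(\T\). Both facts are supplied by Theorem~\ref{thm:proper-extension} and Corollary~\ref{cor:biholo-extension}, so I expect no real obstacle.
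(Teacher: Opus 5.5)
Your proposal is correct and follows essentially the same route as the paper: you take a Riemann map extended biholomorphically across the circle via Corollary~\ref{cor:biholo-extension}, use the factorisation $\gamma'(t)=ie^{it}\phi'(e^{it})$, and apply the argument principle to show that $\phi'$ contributes no winding. Your explicit check that the reparametrisation is regular and orientation preserving is a detail the paper leaves implicit, but it does not change the argument.
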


\begin{proof}

Let \(D\) be the bounded Jordan domain enclosed by \(\Gamma\), and choose a
Riemann map \(\phi:\D\to D\). By
Corollary~\ref{cor:biholo-extension}, \(\phi\) is biholomorphic on a
neighbourhood of \(\ol\D\). The curve
\(\gamma(t)=\phi(e^{it})\) is a reparametrisation of $\Gamma$, and
\[
  \gamma'(t)=ie^{it}\phi'(e^{it}).
\]
The function \(\phi'\) is holomorphic and nonvanishing on \(\ol\D\), so the
argument principle gives
\(\wind_0(\phi'(e^{it}))=0\). Hence \(\wind_0(\gamma')=1\). Reversing the
parameter reverses the winding number.
\end{proof}

\begin{theorem}
\label{thm:riemann-hurwitz}
Let \(G,D\subset\C\) be bounded finitely connected domains.
Suppose that \(G\) has \(m\)
boundary components, \(D\) has \(k\) boundary components, and
\(f:G\to D\) is a proper holomorphic map of degree \(d\). Then
\[
        m-2=d(k-2)+r,
\]
where $r$ is the number of critical points of $f$, counted with
multiplicity.
\end{theorem}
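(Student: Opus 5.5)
We first reduce to unpunctured admissible domains. By the representation result, $G$ and $D$ are admissible. By the puncture-filling lemma, $f$ extends to a proper map $\widetilde f$ between the filled domains, with the same degree $d$. Each puncture of $G$ is sent to a puncture of $D$. By Lemma~\ref{L:bdycorres}, each puncture $q$ of $D$ has preimages consisting only of punctures of $G$, with local multiplicities summing to $d$. Filling a puncture of $G$ where $\widetilde f$ has local degree $e$ adds $e-1$ critical points. Suppose $D$ has $p$ punctures. Filling all punctures of $G$ then lowers $m$ by $\sum 1$ and raises $r$ by $\sum (e-1)$. Over the $p$ punctures of $D$ the local degrees total $pd$, so the net change in $m - r$ is $-pd$. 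This matches the change $-pd$ in $d(k-2)$. Hence it suffices to treat the unpunctured case, with outer components $\Sigma_0,\Gamma_0$ and all curves extended beyond the boundary by Theorem~\ref{thm:proper-extension}.

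In the unpunctured case we count the zeros of $f'$ with the argument principle. Since $f'$ has no zeros on $\partial G$, we get $r = \sum_i \wind_0\bigl((f'\circ\Sigma_i)\bigr)$, with $\partial G$ positively oriented. On each $\Sigma_i$ the chain rule gives $(f\circ\Sigma_i)'(s) = f'(\Sigma_i(s))\,\Sigma_i'(s)$. Therefore
\[
\wind_0\bigl(f'\circ\Sigma_i\bigr)=\wind_0\bigl((f\circ\Sigma_i)'\bigr)-\wind_0\bigl(\Sigma_i'\bigr).
\]
By Lemma~\ref{lem:tangent-turning}, $\wind_0(\Sigma_i') = +1$ for $i=0$ and $-1$ for inner components, so $\sum_i \wind_0(\Sigma_i') = 1-(m-1) = 2-m$.

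Next, $f\circ\Sigma_i$ traverses $\Gamma_{\sigma(i)}$ exactly $d_i$ times as an orientation-preserving covering. Here orientation-preserving means with respect to the boundary orientations induced by $G$ and $D$, which is exactly what Theorem~\ref{thm:proper-extension} provides. The tangent winding of this curve is therefore $d_i$ times that of $\Gamma_{\sigma(i)}$ with its induced orientation. To justify this, lift the covering to a monotone reparametrisation $\theta$ of the arc-length parameter, so that $(f\circ\Sigma_i)'=\theta'\cdot\Gamma_{\sigma(i)}'(\theta)$ with $\theta'>0$. The positive factor does not affect the winding, and $\theta$ runs $d_i$ times around, which gives $d_i\,\wind_0(\Gamma_{\sigma(i)}')$. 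Hence $\wind_0\bigl((f\circ\Sigma_i)'\bigr) = d_i$ if $\sigma(i)=0$ and $-d_i$ otherwise.

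Summing over $i$ and applying Lemma~\ref{lem:boundary-degree}, we group the components by their target $j$. This gives $\sum_i \wind_0\bigl((f\circ\Sigma_i)'\bigr) = d - (k-1)d = d(2-k)$. So $r = d(2-k) - (2-m)$, which is $m-2 = d(k-2)+r$.

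The main obstacle is the bookkeeping of orientations. One must check that the covering preserves the induced boundary orientations, so that inner curves map with the correct sign. One must also verify the chain-rule winding identity with the arc-length parametrisations. The puncture reduction is routine but needs the multiplicity count at punctures to be stated carefully.
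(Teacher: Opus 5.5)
Your proposal is correct and follows the paper's proof essentially step for step. The paper uses the argument principle for $f'$ with the chain rule, Hopf's Umlaufsatz for $\Sigma_i$ and for the $d_i$-fold orientation-preserving covering $f\circ\Sigma_i\to\Gamma_{\sigma(i)}$, Lemma~\ref{lem:boundary-degree} to collect the terms into $d(2-k)$, and the same puncture bookkeeping (its $\widetilde r=r+dt-s$ is your $\sum(e-1)=pd-s$), only in the opposite order; one small correction is that the preimages of a puncture of $D$ lie among the punctures of $G$ simply because $\widetilde f(G)=f(G)\subset D$, not because of Lemma~\ref{L:bdycorres}.
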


\begin{proof}
We will first assume that $G$ and $D$ are unpunctured admissible domains.
By Theorem~\ref{thm:proper-extension}, \(f\) is holomorphic near \(\ol G\)
and \(f'\neq0\) on \(\partial G\). The argument principle yields
\[
        r = \sum_{i = 0}^{m-1}\wind_0(f' \circ \Sigma_i).
\]
For a boundary component \(C\) of either $G$ or $D$, set
\[
 \varepsilon(C)=
 \begin{cases}
 +1,&C\text{ is the outer component},\\
 -1,&C\text{ is an inner component}.
 \end{cases}
\]
Hopf's Umlaufsatz gives
\[
        \wind_0(\Sigma_i')=\varepsilon(\Sigma_i).
\]
The curve \(f\circ\Sigma_i\) traverses
\(\Gamma_{\sigma(i)}\) exactly \(d_i\) times with the boundary orientation
of \(D\). Applying the Umlaufsatz again gives
\[
 \wind_0((f\circ \Sigma_i)')
 =d_i\varepsilon(\Gamma_{\sigma(i)}).
\]
Since
\((f\circ\Sigma_i)'=f'(\Sigma_i)\Sigma_i'\), additivity of winding under
multiplication gives
\[
 \wind_0(f'(z);\Sigma_i)
 =d_i\varepsilon(\Gamma_{\sigma(i)})-
  \varepsilon(\Sigma_i).
\]
Using Lemma~\ref{lem:boundary-degree}, we obtain
\[
\begin{aligned}
\sum_i d_i\varepsilon(\Gamma_{\sigma(i)})
 &=\sum_{j=0}^{k-1}\varepsilon(\Gamma_j)
      \sum_{\sigma(i)=j}d_i\\
 &=d\sum_{j=0}^{k-1}\varepsilon(\Gamma_j)
 =d(2-k),
\end{aligned}
\]
whereas \(\sum_i\varepsilon(\Sigma_i)=2-m\). Therefore
\[
  r=d(2-k)-(2-m),
\]
which is equivalent to the stated formula.

For the general case, let \(P_G\) and \(P_D\) denote the sets of
punctures of \(G\) and \(D\) with cardinalities $s$ and $t$,
respectively. Let $\widetilde f:\widetilde{G} \to \widetilde{D}$ denote the extended
proper holomorphic map between the domains obtained by filling in the punctures.
The connectivities of \(\widetilde G\) and \(\widetilde D\) are
\(m-s\) and \(k-t\), respectively. The degree of $\widetilde f$ remains
$d$. This means that the number of preimages of a point
in $P_D$ is $d$ (counted with multiplicity). These preimages can occur
only at the $s$ points of $P_G$. As there are $t$ points in $P_D$, it
follows that $\widetilde f$ has an additional $dt - s$ critical points.
Thus the number of critical points of \(\widetilde f\) is
\[
    \widetilde r := r+dt-s.
\]
The Riemann--Hurwitz formula for domains without punctures now gives
\[
\begin{aligned}
    m-s-2
      &=d(k-t-2)+r+dt-s \\
      &=d(k-2)+r-s.
\end{aligned}
\]
Adding \(s\) to both sides yields
\[
    m-2=d(k-2)+r,
\]
as required.
\end{proof}

\begin{corollary}
\label{cor:rh-automorphism-bound}
Let \(D\) be an unpunctured admissible domain of connectivity \(k\geq3\).
Then every proper holomorphic self-map of \(D\) is an
automorphism.
\end{corollary}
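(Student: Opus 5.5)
We apply Theorem~\ref{thm:riemann-hurwitz} with \(G=D\), so \(m=k\). Let \(f:D\to D\) be a proper holomorphic self-map of degree \(d\ge1\), and let \(r\ge0\) be the number of its critical points, counted with multiplicity. The Riemann--Hurwitz formula then reads
\[
  k-2=d(k-2)+r .
\]
The aim is to show that this identity forces \(d=1\) and \(r=0\).

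Rearranging gives \((d-1)(k-2)+r=0\). Since \(k\ge3\), we have \(k-2\ge1\). Both \((d-1)(k-2)\) and \(r\) are nonnegative, and their sum is zero, so each vanishes. Because \(k-2>0\), it follows that \(d=1\), and also \(r=0\).

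A proper holomorphic map of degree one is injective, since every point has exactly one preimage counted with multiplicity. It is also surjective by the basic result on proper maps recalled in Section~2. So \(f\) is a holomorphic bijection \(D\to D\). The open mapping theorem then shows that \(f^{-1}\) is continuous, and a bijective holomorphic map of domains has holomorphic inverse. Hence \(f\in\Aut(D)\).

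The only point needing care is that Theorem~\ref{thm:riemann-hurwitz} applies as stated. The hypotheses are satisfied: \(D\) is unpunctured admissible, and the proof of that theorem treated exactly this case first. The role of \(k\ge3\) is only to make \(k-2\) strictly positive; for \(k=2\) the identity reduces to \(r=0\) and gives no control on \(d\).
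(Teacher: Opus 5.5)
Your proposal is correct and follows the paper's proof essentially verbatim: you specialise Theorem~\ref{thm:riemann-hurwitz} to \(G=D\), use \(k-2>0\), \(d\geq1\) and \(r\geq0\) to force \(d=1\) and \(r=0\), and conclude that \(f\) is a holomorphic bijection and hence an automorphism. The only cosmetic difference is in the last step: you invoke the general fact that a holomorphic bijection has a holomorphic inverse, whereas the paper uses \(r=0\) (every value is regular) together with the inverse function theorem.
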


\begin{proof}
Let \(f:D\to D\) be a proper map of degree \(d\), and let \(r\) be the
number of its critical points, counted with multiplicity.
Theorem~\ref{thm:riemann-hurwitz} gives
\[
        k-2=d(k-2)+r.
\]
Since \(k-2>0\), \(d\geq1\), and \(r\geq0\), necessarily \(d=1\) and
\(r=0\). Every value is therefore regular and has exactly one preimage.
Thus \(f\) is bijective, and the inverse function theorem implies that it
is biholomorphic.
\end{proof}

\begin{corollary}\label{cor:degree-bound}
If \(k\geq3\), then every proper map \(f:G\to D\) satisfies
\[
        \deg f\leq\frac{m-2}{k-2}.
\]
In particular, no such map exists when \(m<k\).
\end{corollary}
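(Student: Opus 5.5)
This is immediate from Theorem~\ref{thm:riemann-hurwitz}. The plan is to rearrange the Riemann--Hurwitz identity, bound the critical-point term below by zero, and divide by the positive quantity $k-2$.

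First, let $f:G\to D$ be a proper holomorphic map of degree $d=\deg f\ge 1$, and let $r\ge 0$ be its number of critical points counted with multiplicity. Theorem~\ref{thm:riemann-hurwitz} gives
\[
  m-2=d(k-2)+r\ \ge\ d(k-2).
\]
That theorem is stated for arbitrary bounded finitely connected domains, so no reduction to the unpunctured admissible case is needed. Since $k\ge3$, we have $k-2>0$, and dividing yields $\deg f\le (m-2)/(k-2)$.

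For the second assertion, suppose $m<k$. Then $m-2<k-2$, so $(m-2)/(k-2)<1$. This contradicts $d\ge1$, which holds because proper maps are surjective and have degree at least one. Alternatively, $m<k$ already contradicts $k\le m$ from Lemma~\ref{L:bdycorres}. Either way, no proper map exists.

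There is no real obstacle here. The only points to state explicitly are $r\ge0$ and the positivity of $k-2$, which is exactly where the hypothesis $k\ge3$ enters.
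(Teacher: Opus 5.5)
Your proof is correct and is essentially the paper's own argument: apply Theorem~\ref{thm:riemann-hurwitz}, use $r\ge 0$ and $\deg f\ge 1$, and divide by $k-2>0$. Your alternative remark that $m<k$ already contradicts $k\le m$ from Lemma~\ref{L:bdycorres} is also valid.
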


\begin{proof}
This follows from Theorem~\ref{thm:riemann-hurwitz}, since \(r\geq0\) and
\(\deg f\geq1\).
\end{proof}

\section{Bounds on the size of the set of proper holomorphic maps}
\label{s:proper-bounds}

We require two standard results. The first is an elementary version of
the Hopf lemma. The second is the classical critical-point count for
harmonic measure due to Nevanlinna \cite[p. 36]{Nevanlinna}. We include short
proofs of both.

\begin{lemma}
\label{lem:hopf}
Let \(D\subset\mathbb C\) be a domain with \(C^{2}\)-smooth boundary,
let \(p\in\partial D\), and let \(\nu\) denote the inward unit normal at
\(p\). Suppose that
\[
    v\in C^{1}(\overline D)\cap C^{2}(D)
\]
is a nonconstant harmonic function satisfying
\[
    v\geq0\quad\text{in }D,
    \qquad
    v(p)=0.
\]
Then
\[
    \frac{\partial v}{\partial\nu}(p)>0.
\]
\end{lemma}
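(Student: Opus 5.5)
The plan is the classical barrier (interior-ball) argument, adapted to the smooth boundary. The main geometric input is an interior tangent disc at \(p\); after that the work is a maximum-principle comparison.

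First, the geometry. Since \(\partial D\) is \(C^2\) near \(p\), it satisfies an interior ball condition there. So there are \(R>0\) and a centre \(c=p+R\nu\) such that the disc \(B=\{|z-c|<R\}\) lies in \(D\) and \(\overline B\cap\partial D=\{p\}\). Locally the boundary is a \(C^2\) graph over its tangent line with bounded curvature, and any \(R\) smaller than the reciprocal of the curvature bound, shrunk further so that \(B\) stays inside the coordinate patch, will do. This is the step I expect to need the most care. If it proves awkward, a fallback for the paper's setting (real-analytic boundary) is to use the one-sided analytic collar to conformally straighten the boundary near \(p\) and work in a half-disc; harmonicity is preserved and normal derivatives change only by a positive factor.

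Second, \(v>0\) in \(D\). The function \(v\) is nonconstant, harmonic and nonnegative, so the strong minimum principle gives this: if \(v\) vanished at an interior point it would attain its minimum there and be constant on the connected set \(D\).

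Third, the comparison. Let \(A=\{R/2<|z-c|<R\}\) and use the barrier
\[
  w(z)=\log\frac{R}{|z-c|},
\]
which is harmonic on \(A\), vanishes on \(|z-c|=R\), and is positive on \(A\). The inner circle \(|z-c|=R/2\) is a compact subset of \(D\), so \(v\ge\mu>0\) there. Choose \(\eps>0\) with \(\eps\log 2\le\mu\). Then \(v-\eps w\) is harmonic on \(A\), continuous on \(\overline A\), and nonnegative on both boundary circles: on the outer circle \(v\ge0=\eps w\), and on the inner circle \(v\ge\mu\ge\eps w\). The minimum principle gives \(v\ge\eps w\) on \(A\).

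Finally, the derivative. Points \(p+t\nu\) with small \(t>0\) lie in \(A\), and there \(v(p+t\nu)\ge\eps w(p+t\nu)\) with \(v(p)=w(p)=0\). Dividing by \(t\) and letting \(t\to0^+\), which is legitimate because \(v\in C^1(\overline D)\), gives
\[
  \frac{\partial v}{\partial\nu}(p)\ \ge\ \eps\,\frac{\partial w}{\partial\nu}(p).
\]
Since \(p+t\nu\) lies on the ray from the boundary point toward the centre, \(|p+t\nu-c|=R-t\). Hence \(w(p+t\nu)=-\log(1-t/R)\), whose derivative at \(t=0\) is \(1/R\). Therefore
\[
  \frac{\partial v}{\partial\nu}(p)\ \ge\ \frac{\eps}{R}\ >\ 0.
\]
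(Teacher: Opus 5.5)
Your proposal is correct and follows the paper's proof essentially step for step: the strong minimum principle gives $v>0$, an interior tangent disc at $p$ is chosen, the barrier $\log(R/|z-c|)$ is compared with $v$ on the annulus $R/2<|z-c|<R$ via the maximum principle, and the limit along the inward normal yields the lower bound $\varepsilon/R$. The one addition is your curvature-bound justification that a local $C^2$ graph admits an interior tangent disc, a detail the paper simply asserts.
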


\begin{proof}
The strong minimum principle gives \(v>0\) in \(D\). Choose an interior
tangent disc such that
\[
    \overline{B(a,R)}\subset D\cup\{p\},
    \qquad
    \overline{B(a,R)}\cap\partial D=\{p\}.
\]
On the annulus
\[
    A=\{z:R/2<|z-a|<R\},
\]
consider the harmonic function
\[
    b(z)=\log\frac{R}{|z-a|}.
\]
Since \(v>0\) on the compact circle \(\{|z-a|=R/2\}\), there is an
\(\varepsilon>0\) such that \(v\geq\varepsilon b\) there. On
\(\{|z-a|=R\}\), we have \(b=0\leq v\). The maximum principle therefore
gives
\[
    v(z)\geq\varepsilon\log\frac{R}{|z-a|}
    \qquad (z\in A).
\]
Since \(a=p+R\nu\), it follows that
\[
\begin{split}
    \frac{\partial v}{\partial\nu}(p)
    &=
    \lim_{s\downarrow0}
    \frac{v(p+s\nu)-v(p)}{s} \\
    &\geq
    \varepsilon\lim_{s\downarrow0}
    \frac{1}{s}\log\frac{R}{R-s}
    =\frac{\varepsilon}{R}>0.
\end{split}
\]
\end{proof}

\begin{lemma}\label{lem:critical-points-harmonic-measure}
Let \(D\) be an unpunctured admissible domain of connectivity \(k\geq2\), with
\[
    \partial D=\Gamma_0\sqcup\cdots\sqcup\Gamma_{k-1},
\]
and let \(u_j\) be the harmonic measure of \(\Gamma_j\). Then \(u_j\)
has precisely \(k-2\) critical points in \(D\), counted with
multiplicity.
\end{lemma}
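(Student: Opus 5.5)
Consider the holomorphic function \(g:=\partial_x u_j - i\,\partial_y u_j = 2\,\partial u_j/\partial z\). Its zeros in \(D\), counted with multiplicity, are exactly the critical points of \(u_j\) counted with multiplicity. By the reflection principle for harmonic functions, \(u_j\) extends harmonically across each \(\Gamma_l\), since it is constant there and the curves are real analytic. So \(g\) is holomorphic on a neighbourhood of \(\ol D\). Lemma~\ref{lem:hopf} shows that \(g\) has no zeros on \(\partial D\). On \(\Gamma_j\) we apply it to \(1-u_j\), and on the other components to \(u_j\); in both cases the function is nonnegative and vanishes there. Thus the normal derivative is nonzero at every boundary point, and the tangential derivative vanishes because \(u_j\) is constant on each boundary curve. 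The argument principle then gives
\[
  \#\{\text{critical points of } u_j\} \;=\; \sum_{l=0}^{k-1}\wind_0\bigl(g\circ\Gamma_l\bigr),
\]
with each \(\Gamma_l\) carrying its positive boundary orientation induced by \(D\).

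The key step is to compute each winding number from the geometry of the boundary. At a point of \(\Gamma_l\), let \(T=\Gamma_l'\) be the unit tangent in the boundary orientation. The inward unit normal is then \(\nu=iT\), since \(D\) lies to the left. The gradient of \(u_j\), as a complex number, is \(\nabla u_j = \overline{g}\). On \(\Gamma_l\) the gradient is normal, so \(\overline g=\lambda\,iT\) with \(\lambda=\partial u_j/\partial\nu\) real, nonvanishing and of constant sign along \(\Gamma_l\). The sign is \(\lambda>0\) for \(l\neq j\) and \(\lambda<0\) for \(l=j\). Hence \(g=-i\lambda\,\overline{T}\), and because \(\lambda\) has constant sign,
\[
  \wind_0(g\circ\Gamma_l)=\wind_0(\overline{T})=-\wind_0(\Gamma_l').
\]
Lemma~\ref{lem:tangent-turning} computes \(\wind_0(\Gamma_l')\). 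It equals \(+1\) for the outer component, which is positively oriented with respect to its interior. It equals \(-1\) for each inner component, whose boundary orientation is opposite to the positive orientation of its enclosed Jordan domain. So \(\Gamma_0\) contributes \(-1\) and each of the \(k-1\) inner curves contributes \(+1\). The total is \((k-1)-1=k-2\), independent of \(j\).

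The main obstacle is the sign and orientation bookkeeping in this computation. One must check three things carefully: the identification \(\nabla u=\overline{u_x-iu_y}\), the convention that the inward normal is \(iT\) for the induced orientation on both outer and inner curves, and the observation that only the constancy of the sign of \(\lambda\) matters, not the sign itself. As a sanity check, for an annulus we have \(k=2\) and \(u_j=c\log|z|+c'\), so \(g=c/z\), which has no zeros, matching \(k-2=0\). A second, minor point is to justify the smooth harmonic extension across \(\partial D\), which makes \(g\) holomorphic near \(\ol D\) so that the argument principle applies. The regularity hypothesis \(C^1(\ol D)\) in Lemma~\ref{lem:hopf} holds for the same reason.
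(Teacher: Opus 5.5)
Your proof is correct and follows the paper's argument. On each boundary curve you write $u_x-iu_y=-i\,(\partial u_j/\partial\nu)\,\overline{\tau}$, use the Hopf lemma to get a normal derivative of constant nonzero sign, and combine the argument principle with the Umlaufsatz so that $\Gamma_0$ contributes $-1$ and each inner curve $+1$; you just write out the orientation bookkeeping that the paper leaves implicit.
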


\begin{proof}
Let \(\tau\) be the positively oriented unit tangent, and let \(\nu=i\tau\)
be the inward unit normal. The critical points of $u_j$ are the zeros of
its complex derivative $h := (u_j)_x - i(u_j)_y$. At any boundary point
\(p\in\Gamma_i\), we have
\[
h =
\left(
\frac{\partial u_j}{\partial\tau}
-i\frac{\partial u_j}{\partial\nu}
\right)\overline{\tau}.
\]
The tangential derivative vanishes because \(u_j\) is constant on \(\Gamma_i\),
and by the Hopf lemma the normal derivative has a fixed nonzero sign
on each boundary component. The argument principle together with Hopf's
Umlaufsatz completes the proof.
\end{proof}

We now come to the proof of the main result of this paper.

\Proper*

\begin{proof}
Fix distinct boundary components \(\Gamma_1,\Gamma_2\) of \(D\) and
a boundary assignment \((A_1,A_2)\). Let \(\mathcal F\) be the
corresponding family of proper maps. We may assume that
\(\mathcal F\neq\varnothing\).

Let \(u_j\) be the harmonic measure of \(\Gamma_j\), and let \(v_j\)
be the harmonic function on \(G\) equal to \(1\) on \(A_j\) and to
\(0\) on the remaining boundary components. For every
\(f\in\mathcal F\), uniqueness of the solution to the Dirichlet problem gives
\begin{equation}
\label{eq:pullback}
u_j\circ f=v_j,\qquad j=1,2.
\end{equation}

Define
\[
P=\frac{(u_1)_z}{(u_2)_z},
\qquad
Q=\frac{(v_1)_z}{(v_2)_z}.
\]
These functions are meromorphic in neighbourhoods of
\(\overline D\) and \(\overline G\), respectively. Differentiating
\eqref{eq:pullback} gives, as an identity of meromorphic functions,
\begin{equation}
\label{eq:semiconjugacy}
P\circ f=Q,\qquad f\in\mathcal F.
\end{equation}

It follows from the arguments in the previous lemma that $P$ and $Q$ are
real-valued on the boundary of their respective domains. Moreover, neither
function is constant. Indeed, if \(P\equiv c\), then \(c\in\mathbb R\) and
\[
(u_1-cu_2)_z=0.
\]
Thus \(u_1-cu_2\) is constant; evaluating at a point of $\Gamma_1$ shows
that this constant is $1$. On any boundary component other than
\(\Gamma_1\) and \(\Gamma_2\), however, the left-hand side is $0$, a
contradiction. The same
argument applies to \(Q\) after observing that $A_1$ and $A_2$ together
cannot exhaust the boundary components of \(G\).

Since \(Q\) is nonconstant, \(Q(G)\) is open. Since \(P\) extends
meromorphically across \(\overline D\), it has only finitely many critical
values arising from a fixed neighbourhood of \(\overline D\). We may therefore
choose a nonreal number $c\in Q(G)$ that is a regular value of \(P\).
Now choose \(z_0\in G\) such that $Q(z_0)=c$.
For every \(f\in\mathcal F\), \eqref{eq:semiconjugacy} gives $P(f(z_0))=c$.
Every point of \(P^{-1}(c)\cap D\) is a zero of $(u_1)_z-c(u_2)_z$.
This function has exactly \(k-2\) zeros in \(D\), counted with
multiplicity. Indeed, it has no boundary zeros because \(c\notin
\mathbb R\) and an argument analogous to the one used in the proof of
Lemma~\ref{lem:critical-points-harmonic-measure} gives the claim. Consequently,
\[
\#\bigl(P^{-1}(c)\cap D\bigr)\leq k-2,
\]
and $f(z_0)$ must be one of these points. In particular, there are at
most \(k-2\) possibilities for \(f(z_0)\) as \(f\) varies over
\(\mathcal F\). We claim that the value of \(f(z_0)\) determines \(f\) uniquely.
Suppose that $g \in \mathcal F$ is another map such that
\[
f(z_0)=g(z_0)=w_0.
\]
Since \(c\) is a regular value of \(P\), the function \(P\) is
one-to-one on a neighbourhood of \(w_0\). From
\[
P\circ f=Q=P\circ g
\]
we obtain \(f=g\) near \(z_0\), and hence on \(G\). Thus
\[
\#\mathcal F\leq k-2.
\]

Each boundary component of \(G\) belongs to exactly one of three nonempty
classes: those mapped onto \(\Gamma_1\), those mapped onto
\(\Gamma_2\), and the remainder. The number of such assignments is
\[
3^m-3\cdot2^m+3.
\]
Therefore
\[
\boxed{
\#\{f:G\to D:f\text{ is proper holomorphic}\}
\leq
(k-2)\bigl(3^m-3\cdot2^m+3\bigr).
}
\]
If $G = D$ then any proper holomorphic self-map must necessarily have degree $1$
for otherwise we would get infinitely many proper holomorphic maps by iteration.
The stated bound on the size of the automorphism group is also immediate as
there are $k$ possible choices for the preimage of $\Gamma_1$ and $k-1$ for
$\Gamma_2$.

\end{proof}

\begin{remark}
  Our bound is quite loose but has the benefit of being clean. One can
  give tighter bounds by taking into account the number of boundary components
  of \(G\) that are mapped onto each boundary component of \(D\).
\end{remark}

\subsection*{Disclosure on the use of AI tools.}
ChatGPT-5.5 Plus, ChatGPT-5.6 Sol, and Claude Opus 5 were used in the
preparation of this article. We used these models for brainstorming, testing and
implementing ideas, writing preliminary versions of arguments, copyediting, and
proofreading. Ideas generated by the aforementioned models were used in the
proofs of Result~1.2, Theorem~1.3 and Theorem~4.3.
GitHub Copilot was used for LaTeX autocompletion. All AI generated arguments
have been independently verified by the author. The
author assumes full responsibility for the contents of this article. 

 \bibliographystyle{amsalpha}

\bibliography{proper}

\end{document}